\let\svthefootnote\thefootnote
\newcommand\blankfootnote[1]{%
	\let\thefootnote\relax\footnotetext{#1}%
	\let\thefootnote\svthefootnote%
}
\let\svfootnote\footnote
\renewcommand\footnote[2][?]{%
	\if\relax#1\relax%
	\blankfootnote{#2}%
	\else%
	\if?#1\svfootnote{#2}\else\svfootnote[#1]{#2}\fi%
	\fi
}
\newtheorem{theorem}{Theorem}[section]
\newtheorem{lemma}[subsection]{Lemma}
\newtheorem{corollary}[theorem]{Corollary}
\newtheorem{remark}[theorem]{Remark}
\newtheorem{definition}[theorem]{Definition}
\newtheorem{proposition}{Proposition}
\begin{document}
	
	\numberwithin{equation}{section}

	\title{Mean-stable surfaces in Static Einstein-Maxwell theory}
	
	\author{Fernando Coutinho$^1$ and Benedito Leandro$^2$}
	\footnote[]{$^{1,2}$Universidade Federal de Goiás, IME, CEP 74690-900, Goiânia, GO, Brazil.}
	\footnote[]{$^1$Universidade do Estado do Amazonas, CEST, 1085, CEP 69552-315, Tefé, AM, Brazil.}
	\footnote[]{Email address: \textsf{fcoutinho@uea.edu.br$^1$, bleandroneto@ufg.br$^2$.}}
	\footnote[]{Fernando Soares Coutinho was partially supported by PROPG-CAPES/FAPEAM.}
	
		\footnote[]{Benedito Leandro was partially supported by Brazilian National Council for Scientific and Technological Development (CNPq Grant 403349/2021-4).}

	\date{}

	\maketitle{}
	
	\begin{abstract}
		In this paper we use the theory of mean-stable surfaces (stable minimal surfaces included) to explore the static Einstein-Maxwell space-time. We first prove that the zero set of the lapse function must be contained in the horizon boundary. Then, we explore some implications of it providing some results of nonexistence of stable minimal surfaces in the interior of an electrostatic space, subject to a certain initial boundary data. We finish by proving that the ADM mass is bounded from above by the Hawking quasi-local mass.
	\end{abstract}
	
	\vspace{0.2cm} \noindent \emph{2020 Mathematics Subject
		Classification} : 83C22, 53A10, 83C05.
	
	\vspace{0.4cm}\noindent \emph{Keywords}: Electro-vacuum, constant mean curvature, minimal surface, asymptotically flat.
	

	\section{Introduction}
	\

	The equations of motion for an $(n+1)$-dimensional Einstein-Maxwell space-time are given by
	\begin{eqnarray}\label{1}
	(\widehat{R}ic)_{ij}=2\left(F_{il}F^{l}_{j}-\frac{1}{2(n-1)}F_{lk}F^{lk}\hat{g}_{ij}\right)\quad\mbox{and}\quad d\star F=0; 1\leq i,\,j\leq n+1,
	\end{eqnarray}
	where $F$ represents the electromagnetic field and $\widehat{R}ic$ is the Ricci tensor for the metric $\hat{g}.$ 
	
	Our main ground is the static space-time $(\widehat{M}^{n+1},\hat{g})=M^{n}\times_{f}\mathbb{R}$ such that 
	\begin{eqnarray}\label{0}
	\hat{g}(x,\,t)=g(x)-f^{2}(x)dt^{2};\quad x\in M,
	\end{eqnarray}
	where $(M^{n},g)$ is an open, connected and oriented Riemannian manifold, and $f$ is a smooth warped function. Thus, considering $$F=d\psi\wedge dt,$$
	for some smooth function $\psi$ on $M$, from the warped formulas, \eqref{1} and \eqref{0} we have the following definition (see \cite{chrusciel1999,kunduri2018} and the references therein).
	
	\begin{definition} 
		\label{defA} Let $(M^n,g)$ be an $n$-dimensional smooth Riemannian manifold  with $n\geq 3$ and let $f,\psi:M\rightarrow\mathbb{R}$ be smooth functions satisfying
		\begin{eqnarray}\label{principaleq}
		f{Ric}={\nabla^2}f- \frac{2}{f}\nabla\psi\otimes\nabla\psi + \frac{2}{(n-1)f} |\nabla\psi|^{2}g,
		\end{eqnarray}
		\begin{eqnarray}\label{lapla}
		\Delta f=2\left(\frac{n-2}{n-1}\right)\frac{|\nabla\psi|^{2}}{f}
		\end{eqnarray} 
		and
		\begin{eqnarray*}\label{diver}
			div\left(\frac{\nabla\psi}{f}\right)=0,
		\end{eqnarray*}
		where $Ric$ denotes the Ricci tensor of $(M^n,\, g).$ Moreover, ${\nabla}^{2}$ and $div$ are, respectively, the Hessian and the divergence for $g$ and $\Delta$ is the Laplacian operator. Then $(M^n,\,g,\,f,\,\psi)$ is called an {\it electrostatic system.} The smooth functions $f$, $\psi$ and $M^{n}$ are called \textit{lapse  function}, \textit{electric potential} and spatial factor for the static Einstein-Maxwell space-time, respectively. 
	\end{definition} 
	The above definition implies that the scalar curvature $R$ for the metric $g$ is given by
	\begin{eqnarray}\label{scalarcurv}
	f^{2}R=2|\nabla\psi|^{2}. 
	\end{eqnarray}

	There are some well-known solutions for the electrostatic equations which characterize the static Einstein-Maxwell space-time as a model for an electrically charged black hole. The Reissner-Nordstr\"om electrostatic space-time is one of the most important solutions and represents a model for a static black hole with electric charged $q$ and mass $m$. And it is called subextremal, extremal or superextremal if $m^{2}>q^{2}$, $m^{2}=q^{2}$ and $m^{2}<q^{2}$, respectively. One of the most interesting properties of such electrically charged black hole model is the fact that it allows the existence of multiple black holes in equilibrium due to electric forces involved. Notice that if $\psi$ is a constant function, then the electrostatic system reduces to the static vacuum Einstein equations (cf. \cite{anderson2000,huang2018,miao2003,miao2005,miao2015}), i.e.,
	\begin{eqnarray}\label{vacuum}
	fRic=\nabla^{2}f\quad\mbox{and}\quad\Delta f=0.
	\end{eqnarray}
	These equations characterize the static vacuum space-time, where the most important solution for this system is the Schwarzschild solution. This solution is a model for a static black hole without electric charge.
	
	We now define an important function which was provided by Ruback in \cite{ruback1988} and plays an important role in the study of the static Einstein-Maxwell theory: $$F_{\pm}=f^{2}-(1\pm\psi)^{2}.$$ This function has important properties and it was used before for several authors to get important result about the static Einstein-Maxwell space-time (cf. \cite{chrusciel1999,sophia2019,kunduri2018,ruback1988}). In \cite{kunduri2018}, the authors were able to prove that in an asymptotically flat $3$-dimensional static Einstein-Maxwell space-time $m=|q|$ (extremal electrostatic manifold) if and only if $f=1-\psi$, admitting $f=0$ at $\partial M$. It is worth to say that an extremal Reissner–Nordström space-time contains a unique photon sphere, on which light can get trapped and it has the largest possible
	ratio of charge to mass (cf. \cite{cederbaum2017,sophia2019}). The theory of extremal black holes is very important in physics and has very interesting properties. For instance, extremal charged
	black holes may be quantum mechanically stable, which is consistent with the ideas of cosmic censorship (cf. \cite{horowitz1991}). Moreover, there is evidence that this type of black hole is important to the understanding of the no hair theorem (cf. \cite{burko2021}).
	
	So, in what follows we consider an extremal static Einstein-Maxwell space-time if 
	\begin{eqnarray*}\label{ms}
		f=1-\psi.
	\end{eqnarray*}

	The Reissner-Nordstr\"om manifold can be defined by the Riemannian manifold $M^{n}=\mathbb{S}^{n-1}\times (r^{+},\,+\infty)$ with metric tensor
	$$g=\frac{dr^{2}}{1-2mr^{2-n}+q^{2}r^{2(2-n)}}+r^{2}g_{\mathbb{S}^{n-1}},$$
	where $r$ represents the radius of the Reissner-Nordstr\"om black hole. Here, $m^{2}\geq q^{2}$ are constants, and $r^{+}>(m+\sqrt{m^{2}-q^{2}})^{1/(n-2)}$. The Reissner-Nordstr\"om manifold is one of the most relevant solutions for the electrostatic system, or Einstein-Maxwell equations (Definition \ref{defA}). The scalar curvature $R$ for the Reissner–Nordstr\"om manifold is given by:
	\begin{eqnarray}\label{scalarnordstron}
	R=\dfrac{(n-1)(n-2)q^{2}}{r^{2(n-1)}}.
	\end{eqnarray} So, we can see that the scalar curvature of the Reissner-Nordstr\"om solution is decreasing as long $r$ increases, and goes to zero at infinity. Moreover, the outer horizon for the Reissner-Nordstr\"om space-time is located at $(m+\sqrt{m^{2}-q^{2}})^{1/(n-2)}$, which corresponds to the zero set of the lapse function $(1-2mr^{2-n}+q^{2}r^{2(2-n)})$ of the Reissner-Nordstr\"om manifold. In general, the static horizon is defined as the set where the lapse function for a static manifold is identically zero. This set is physically related with the event horizon, the boundary of a black hole.

	So, an interesting question rises. The study of the zero set of the lapse function in static Einstein-Maxwell theory. This static space-time is more general than the one assumed in \cite{huang2018,miao2005,miao2015}, where the authors considered the static vacuum space-time \eqref{vacuum}. The first static vacuum Einstein black hole model was provided by Schwarzschild. This model is characterized only by the mass of the black hole. The geometry of the zero set of the lapse function for this model is well-known and turns out that it has the geometry of the sphere (see \cite{galloway1993}). The study of the zero set of the lapse function was discussed recently in \cite{huang2018} and \cite{miao2015}. Huang et al. proved that lapse function is zero only at the horizon boundary of the static vacuum space \cite[Theorem 1 ]{huang2018}. This result is closely related with the positive mass theorem \cite[Theorem 7]{huang2018}.

	Inspired by \cite{huang2018,miao2015} we will prove that $f=0$ at the horizon boundary $\partial M$ (Definition \ref{defhorizon}) of an electrostatic manifold $M$ and explore its consequences. Usually, a hypersurface in $M^n$ is called a static horizon if it is the zero level set of the static lapse function $f$, and it is called nondegenerate if the outer normal derivative of the lapse is positive. It is well-known that such horizon is a minimal hypersurface (see \cite[Lemma 20]{sophia2019}). We further remark that the zero set of the lapse function $\{f=0\}$ may be formally defined as the set of limit points of Cauchy sequences on $(M^n,\,g)$ on which $f$ converges to $0$ (cf. \cite{anderson2000}). Let us establish the boundary conditions in the following definition.

	\begin{definition}\label{defhorizon}
		We say that an electrostatic manifold $M^{n}$ has a {\it horizon boundary} if the interior boundary $\partial M$ is a disjoint union of smooth closed and minimal hypersurfaces such that $M$ contains no other closed minimal hypersurfaces. Moreover, we assume that $\partial M$ is locally area minimizing. 
	\end{definition}

	In this work we will generalize some important results about the static vacuum Einstein manifolds contained in \cite{huang2018} and \cite{miao2005} to an electrostatic manifolds (or Einstein-Maxwell manifold) which are more general. Also, we would like to  remark that our results can be useful to extend the results provided in \cite{miao2015} to the static Einstein-Maxwell theory.
	
	Our first result establishes some important properties for an electrostatic manifold having a minimal surface in its interior. In particular, the following theorem provides a local classification for an electrostatic manifold (see the third item of Theorem \ref{propositionsplitting}). Moreover, this result will be crucial towards the next results.

	Without further ado, we state our main results.
	
	\begin{theorem}\label{propositionsplitting}
		Let $(M^3,\,g,\,f,\,\psi)$ be a solution for an electrostatic system.  Let $\Sigma$ be a  locally area minimizing, closed, connected minimal surface.  Suppose that in $\Sigma$ we have $dR\leq0$ and $R\geq3R_\Sigma$, i.e., the scalar curvature $R$ is decreasing and is bounded by the scalar curvature $R_\Sigma$ of $\Sigma$ with respect to the induced metric. Assume that $f$ is not identically zero at $\Sigma$. Moreover, consider the existence of a subset $U$ of $M$ and a diffeomorphism  $\Phi: \Sigma\times [0, \varepsilon) \to U$, where $\Sigma_t:=\Phi(\Sigma\times\{t\})$. Then, the following holds:
		\begin{enumerate}
			\item The volume of $\Sigma_t$ is constant and it is totally geodesic for each $t\in[0,\,\varepsilon)$.
			\item The scalar curvatures $R$ and $R_{\Sigma_t}$ are identically zero in $\Sigma_t$ for each $t$. 
			\item The Ricci curvature of $g$ is zero on $U.$ 
		\end{enumerate}
		Here, $R_{\Sigma_t}$ stands for the scalar curvature of $\Sigma_t$ with respect to the induced metric.
	\end{theorem}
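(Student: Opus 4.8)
The plan is to combine the stability that $\Sigma$ inherits from being locally area minimizing with the traced Gauss equation and the Gauss--Bonnet theorem, using the sign of the scalar curvature forced by \eqref{scalarcurv}, and then to propagate the resulting pointwise rigidity of $\Sigma$ to the whole neighborhood $U$. The diffeomorphism $\Phi$ provides a nowhere-vanishing transverse field, so $\Sigma$ is two-sided; denote by $\nu$ the unit normal and by $A$ the second fundamental form. Since $\Sigma$ minimizes area locally, the second variation is nonnegative, and testing the stability inequality with $\phi\equiv1$ gives $\int_\Sigma(\mathrm{Ric}(\nu,\nu)+|A|^2)\,d\mu\le0$. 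For a minimal surface the traced Gauss equation reads $R_\Sigma=R-2\,\mathrm{Ric}(\nu,\nu)-|A|^2$, whence $\mathrm{Ric}(\nu,\nu)+|A|^2=\tfrac12(R-R_\Sigma+|A|^2)$ and the stability inequality becomes
\begin{equation*}
\int_\Sigma\big(R-R_\Sigma+|A|^2\big)\,d\mu\le0.
\end{equation*}

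First I would note that \eqref{scalarcurv} forces $R\ge0$ wherever $f\ne0$, hence $R\ge0$ on $\Sigma$ because $f\not\equiv0$ there. Inserting $R\ge0$ and the Gauss--Bonnet identity $\int_\Sigma R_\Sigma\,d\mu=4\pi\chi(\Sigma)$ into the displayed inequality gives $\chi(\Sigma)\ge0$; inserting instead the hypothesis $R\ge3R_\Sigma$ gives $12\pi\chi(\Sigma)\le\int_\Sigma R\,d\mu\le4\pi\chi(\Sigma)$, so $\chi(\Sigma)\le0$. Therefore $\chi(\Sigma)=0$, $\Sigma$ is a torus, and $\int_\Sigma R_\Sigma\,d\mu=0$. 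The displayed inequality then reduces to $\int_\Sigma(R+|A|^2)\,d\mu\le0$ with both integrands nonnegative, forcing $R\equiv0$ and $A\equiv0$ on $\Sigma$; thus $\Sigma$ is totally geodesic and scalar-flat. Since $R=0$ together with $R\ge3R_\Sigma$ gives $R_\Sigma\le0$ while $\int_\Sigma R_\Sigma\,d\mu=0$, also $R_\Sigma\equiv0$, and the Gauss relation yields $\mathrm{Ric}(\nu,\nu)=\tfrac12(R-R_\Sigma-|A|^2)=0$. This settles items (1) and (2) on the single leaf $\Sigma_0=\Sigma$.

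To reach the whole family $\{\Sigma_t\}$ and the set $U$ I would exploit the monotonicity hypothesis $dR\le0$ along the foliation. As $R\ge0$ on $U$ by \eqref{scalarcurv}, vanishes on $\Sigma_0$, and is nonincreasing in $t$, it must vanish identically on $U$; hence by \eqref{scalarcurv} $|\nabla\psi|^2\equiv0$, so $\psi$ is constant and the electrostatic system degenerates on $U$ to the static vacuum equations $f\,\mathrm{Ric}=\nabla^2f$ and $\Delta f=0$. Now $U$ is a scalar-flat region containing the locally area-minimizing torus $\Sigma_0$, whose Jacobi operator $\Delta_{\Sigma_0}+\mathrm{Ric}(\nu,\nu)+|A|^2=\Delta_{\Sigma_0}$ has the constants as null first eigenfunctions, so $\Sigma_0$ is infinitesimally rigid. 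I would then run a Cai--Galloway type local-splitting argument: build the constant-mean-curvature foliation near $\Sigma_0$ and track $A(t)=|\Sigma_t|$; local area minimization gives $A(t)\ge A(0)$, while the second-variation computation with $R\equiv0$ gives $A(t)\le A(0)$, so $A(t)\equiv A(0)$. Constant area makes every $\Sigma_t$ a stable minimal surface, to which the argument of the preceding paragraph applies verbatim, giving $\Sigma_t$ totally geodesic with $R_{\Sigma_t}\equiv0$ and $R\equiv0$ for each $t$; this yields item (1) (constant volume) and item (2) in full.

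Finally, a foliation by totally geodesic, flat leaves (on a torus $R_{\Sigma_t}\equiv0$ forces $g_{\Sigma_t}$ flat) with $\mathrm{Ric}(\nu,\nu)\equiv0$ forces $U$ to split isometrically as $(\Sigma\times[0,\varepsilon),\,dt^2+g_\Sigma)$ with $g_\Sigma$ flat; such a product metric is flat, so $\mathrm{Ric}\equiv0$ on $U$, which is item (3). The main obstacle is exactly this propagation from $\Sigma_0$ to $U$: the construction and control of the CMC foliation, the verification that area minimization together with the scalar-flatness forced by $dR\le0$ pins the area to be constant, and the passage from ``each leaf totally geodesic with vanishing normal Ricci'' to a genuine isometric splitting. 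It is precisely the sign condition $dR\le0$ that makes $R\equiv0$ on $U$ available; without it the leaves with $t>0$ need not inherit the rigidity established on $\Sigma_0$.
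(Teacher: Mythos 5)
Your treatment of the initial leaf $\Sigma_0$ is correct and it is genuinely different from the paper's. The paper never touches topology: it runs the normal exponential flow of the conformal metric $f^{-2}g$, proves the monotonicity inequality $\frac{d}{dt}\left(H/f\right)\ge|A|^2$ from the electrostatic equations (Lemma~\ref{lemma:monotonicity}), and combines it with the first variation of area and local minimization to make \emph{every} leaf minimal and totally geodesic; only then does a Bochner/divergence computation on each leaf (using $dR\le0$ and $R\ge3R_{\Sigma_t}$ there) force $R=R_{\Sigma_t}=0$ and $f$ constant leafwise. Your route---stability tested with $\phi\equiv1$, the traced Gauss equation, $R\ge0$ from \eqref{scalarcurv}, and Gauss--Bonnet forcing $\chi(\Sigma)=0$---reaches the same rigidity on $\Sigma_0$ more cheaply, shows in addition that $\Sigma$ is a torus, and does not use $dR\le0$ at all. (Both you and the paper quietly need $f>0$ on $\Sigma$, not merely $f\not\equiv0$, in order to have $R\ge0$ on all of $\Sigma$; the paper's own proof opens with exactly that strengthening.)

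The genuine gap is the propagation from $\Sigma_0$ to $U$, which you yourself flag as the main obstacle, and two specific steps fail as written. First, you obtain $R\equiv0$ on $U$ by reading $dR\le0$ as ``$R$ nonincreasing in $t$'' throughout $U$; but the hypothesis is imposed only on $\Sigma=\Sigma_0$, so it gives at most a one-sided derivative condition at $t=0$ and cannot be integrated across $[0,\varepsilon)$. (The paper also stretches this hypothesis, applying it on every $\Sigma_t$, but its leaves are canonically produced by the flow and its computation is done leaf by leaf.) Second, inside your Cai--Galloway sketch you assert that each CMC leaf becomes a stable minimal surface ``to which the argument of the preceding paragraph applies verbatim''; it does not, because that argument consumes hypotheses ($R\ge3R_{\Sigma_t}$, local area minimization) available only on $\Sigma_0$, and CMC leaves produced by the implicit function theorem are a priori neither minimal nor area-minimizing. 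The actual Cai--Galloway proof obtains leaf rigidity differently (sign of the mean curvature along the foliation, area comparison against the minimizer, then constancy of the lapse to get the isometric splitting), and none of those steps are carried out here. The clean repair is to cite rather than sketch: after your first paragraph you have a two-sided, locally area-minimizing torus inside a region where $R\ge0$ (by \eqref{scalarcurv}, wherever $f>0$), and the Cai--Galloway rigidity theorem for area-minimizing tori in $3$-manifolds of nonnegative scalar curvature then yields that a neighborhood of $\Sigma$ is flat and splits isometrically, which delivers all three items and shows that on your route $dR\le0$ is dispensable. As the proposal stands, however, items (1)--(3) for $t>0$ are asserted, not proved.
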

	
	\begin{remark}\label{remarkcurvature}
	To prove the first item of the above theorem no additional assumption over the scalar curvatures are required and no restriction on the dimension is necessary. 
It is well-known that the scalar curvature $R$ of $M$ must be constant at the horizon boundary, which is a minimal hypersurface, for an electrostatic system (cf. \cite{chrusciel1999,sophia2019,kunduri2018,ruback1988}). Therefore, it is reasonable reduce the conditions over the scalar curvatures in Theorem \ref{propositionsplitting} to $R$ constant at $\Sigma$ and $R_{\Sigma}\leq \frac{1}{3}R\Big|_{\Sigma}$, where $R\Big|_{\Sigma}$ is the constant value of $R$ restrict to any minimal hypersurface $\Sigma$ in $M$. Moreover, from \eqref{danada111} and Gauss-Bonnet if $\Sigma$ possess a topology different from the topology of the sphere the restriction  $R_{\Sigma}\leq \frac{1}{3}R$, at $\Sigma$, is not required.
	\end{remark}
	
	\begin{remark}\label{remarkarea}
	    Let us make an experiment regarding the hypothesis $R_\Sigma\leq \frac{1}{3}R$ to see what this condition may represent. Considering an $3$-dimensional Reissner-Nordstr\"om solution we have the outer horizon at $r_0=(m+\sqrt{m^{2}-q^{2}})$, for $m^2\geq q^2$. Thus, from \eqref{scalarnordstron} we can see that $R\leq\frac{2q^2}{r_{0}^{4}}.$ So, the scalar curvature is bounded from above. Assuming, that the scalar curvature for the electrostatic system is bounded from above by a positive constant $c$, i.e., $R\leq c$, we have
	    \begin{eqnarray*}
	  R_\Sigma\leq \frac{1}{3}R\quad\Rightarrow\quad \int_{\Sigma}R_\Sigma\leq \frac{c}{3}|\Sigma|,
	    \end{eqnarray*}
	    where $|\Sigma|$ stands for the area of $\Sigma.$
	    So, from Gauss-Bonnet we have
	    	    \begin{eqnarray*}
2\pi\mathfrak{X}(\Sigma)\leq\frac{c}{3}|\Sigma|.
	    \end{eqnarray*}
	    Here, $\mathfrak{X}(\Sigma)$ represents the Euler characteristic of $\Sigma.$
	    
	    For instance, if $\Sigma$ is a topological sphere we obtain a lower bound for the area of $\Sigma$, i.e.,
	    	    \begin{eqnarray*}
12\frac{\pi}{c}\leq|\Sigma|.
	    \end{eqnarray*}
	    
	   Thus, it is interesting to compare this inequality with the Penrose inequality, which is an upper bound for the area (i.e., an isoperimetric inequality).
	   \end{remark}

	\begin{remark}
		We point it out that in Proposition \ref{lemmaf} we proved that any minimal hypersurface in an $n$-dimensional electrostatic manifold is, in fact, totally geodesic.
	\end{remark}
	
	The following result will generalize Theorem 1 in \cite{huang2018} to an $n$-dimensional Einstein-Maxwell manifold having a horizon boundary, and it is a straightforward consequence of Theorem \ref{propositionsplitting}. It is worth to say that the property ``locally area minimizing'' over the horizon boundary (Definition \ref{defhorizon}) follows directly from the other two properties for $3\leq n\leq 7$.

	\begin{theorem}\label{teo1}
		Let $(M^n,\,g,\,f,\,\psi)$ be a solution for an electrostatic system with a horizon boundary $\partial M$, then $f=0$ in $\partial M$.  
	\end{theorem}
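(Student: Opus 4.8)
The plan is to argue by contradiction, treating the first item of Theorem \ref{propositionsplitting} as the engine and the defining property of a horizon boundary as the source of the contradiction. Suppose that $f$ is not identically zero on $\partial M$. Since $\partial M$ is, by Definition \ref{defhorizon}, a disjoint union of smooth closed minimal hypersurfaces, I would select a connected component $\Sigma$ of $\partial M$ on which $f \not\equiv 0$. By hypothesis $\Sigma$ is closed, connected, minimal, and locally area minimizing, so it meets every structural requirement placed on $\Sigma$ in Theorem \ref{propositionsplitting}, and in particular the assumption ``$f$ is not identically zero at $\Sigma$'' holds.

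Next I would produce the ambient parametrization demanded by that theorem. Using the normal exponential map of $\Sigma$ pointing into the interior of $M$, one obtains, for $\varepsilon$ small enough, an embedded collar and hence a diffeomorphism $\Phi: \Sigma \times [0, \varepsilon) \to U \subset M$ with $\Phi(\Sigma \times \{0\}) = \Sigma$ and $\Sigma_t := \Phi(\Sigma \times \{t\})$ contained in the \emph{interior} of $M$ for every $t > 0$. This is precisely the data hypothesized in Theorem \ref{propositionsplitting}, so no extra construction beyond the standard tubular neighborhood is needed.

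With this setup in place, I would invoke the first item of Theorem \ref{propositionsplitting}. As recorded in Remark \ref{remarkcurvature}, the conclusion that each leaf $\Sigma_t$ has constant volume and is totally geodesic requires neither the scalar curvature hypotheses $dR \le 0$ and $R \ge 3R_\Sigma$ nor the restriction to dimension three; combined with Proposition \ref{lemmaf}, which guarantees that a minimal hypersurface in an $n$-dimensional electrostatic manifold is automatically totally geodesic, this yields the same conclusion for every $n \ge 3$. Hence each $\Sigma_t$ is totally geodesic, and in particular minimal.

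The contradiction is then immediate: for any fixed $t \in (0, \varepsilon)$, the hypersurface $\Sigma_t$ is closed (being diffeomorphic to the closed $\Sigma$), minimal, and strictly interior to $M$, hence distinct from $\partial M$. This violates the requirement in Definition \ref{defhorizon} that $M$ contain no closed minimal hypersurfaces other than $\partial M$. Therefore the assumption $f \not\equiv 0$ on $\Sigma$ is untenable, and since $\Sigma$ was an arbitrary component of $\partial M$, we conclude $f = 0$ on all of $\partial M$. The main obstacle I anticipate is not a new estimate but a matter of transfer: one must verify that the area-monotonicity and stability argument underlying the total geodesy of the leaves in the first item of Theorem \ref{propositionsplitting} uses only the local area-minimizing property of $\Sigma$ and the electrostatic equations, and nothing special to $n=3$. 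The appeal to Proposition \ref{lemmaf} is exactly what secures this dimensional generalization.
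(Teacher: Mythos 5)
Your proposal is correct and takes essentially the same route as the paper: argue by contradiction, invoke the first item of Theorem \ref{propositionsplitting} (which, as Remark \ref{remarkcurvature} notes, needs no curvature hypotheses and no dimension restriction) to foliate a collar of $\partial M$ by closed minimal hypersurfaces $\Sigma_t$, and contradict the requirement in Definition \ref{defhorizon} that $M$ contain no closed minimal hypersurfaces other than $\partial M$. The only cosmetic differences are that you argue component by component and explicitly supply the collar diffeomorphism, whereas the paper applies the theorem to $\partial M$ as a whole.
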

	
	Now, we will show a result which is closely related with the rigidity version of the positive mass theorem. But first, let us establish our asymptotic conditions. Here, an $n$-dimensional electrostatic system is called asymptotic flat if $M^n\backslash K=\displaystyle\cup_{\kappa}E^{n}_{\kappa}$, for some compact subset $K\subset M^n$, and there is a coordinate chart on each end, $E^{n}_{\kappa} = \mathbb{R}^{n}\backslash B_{1}(0)$, such that the the metric $g$ and the lapse function
	$f$ satisfy, respectively,
	\begin{eqnarray*}\label{asymp}
		\quad g_{ij} - \left(1+\dfrac{2m}{r^{n-2}}\right)\delta_{ij}=o(r^{-(n-1)})
		\quad\mbox{and}\quad f - \left(1-\frac{m}{r^{n-2}}\right)=o(r^{-(n-1)}),
	\end{eqnarray*}
	with first derivatives $o(r^{-n})$ as $r \rightarrow\infty$.
	Here, $\delta$ denotes the
	Euclidean metric on $\mathbb{R}^{n}$ and $r :=\sqrt{
		x_1^{2} + \ldots + x_n^{2}}$ denotes the radial coordinate
	corresponding to the coordinates $(x_i)$ on $E_{\kappa}^{n}$. Moreover, $m\in\mathbb{R}$ and it represents the ADM mass.
	
	The following result is closely related with the rigidity part of the positive mass theorem (cf. Theorem 7 in \cite{huang2018}).

	\begin{corollary}\label{coro1}
Let $(M^3,\,g,\,f,\,\psi)$ be an asymptotically flat solution for an electrostatic system with horizon boundary, then there is no end $E^{n}_{k}$ having zero ADM mass. 
	\end{corollary}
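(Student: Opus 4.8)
The plan is to argue by contradiction using the rigidity part of the positive mass theorem, after reflecting $M$ across its horizon boundary. Suppose some end $E^n_k$ has zero ADM mass, i.e. $m=0$. The first step is to record that the scalar curvature is nonnegative everywhere. From \eqref{scalarcurv} we have $f^2 R = 2|\nabla\psi|^2 \geq 0$, and since by Theorem \ref{teo1} (together with the main fact, established earlier in the paper, that the zero set of the lapse satisfies $\{f=0\}\subseteq\partial M$) the lapse vanishes only on $\partial M$, we get $R = 2|\nabla\psi|^2/f^2 \geq 0$ on the interior; as $\{f\neq 0\}$ is dense, $R\geq 0$ on all of $M$ by continuity.

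Next I would use that $\partial M$ is a closed minimal hypersurface which is in fact totally geodesic (Proposition \ref{lemmaf}, equivalently the first item of Theorem \ref{propositionsplitting}). This is precisely the property needed to double the manifold: let $\widetilde{M} := M \cup_{\partial M} M$ be obtained by gluing two isometric copies of $M$ along $\partial M$ with the reflected metric. Because the gluing hypersurface has vanishing second fundamental form from both sides, the mean curvatures match ($H_+ = H_- = 0$) and the doubled metric is of class $C^{1,1}$ with distributionally nonnegative scalar curvature across $\partial M$. Hence the positive mass theorem and its rigidity statement remain available on $\widetilde{M}$; here the hypothesis $n=3$ is what guarantees this. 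The doubled manifold $\widetilde{M}$ is complete, has empty boundary, and its ends are two copies of each end of $M$; in particular it carries an asymptotically flat end (a copy of $E^n_k$) with zero ADM mass.

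I would then invoke the rigidity case of the positive mass theorem: a complete, asymptotically flat $3$-manifold with $R\geq 0$ possessing an end of zero mass must be isometric to Euclidean $\mathbb{R}^3$. But $\mathbb{R}^3$ has a single end, whereas $\widetilde{M}$ has at least two (being a double); equivalently, $\mathbb{R}^3$ contains no closed minimal hypersurface, while the gluing locus $\partial M \subset \widetilde{M}$ is one. Either way we reach a contradiction, so no end of $M$ can have zero ADM mass.

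The main obstacle is the regularity of the doubled metric along $\partial M$ and the legitimacy of applying the rigidity statement across it: one must ensure that reflecting through the totally geodesic minimal boundary yields a metric whose (a priori only distributional) scalar curvature is nonnegative and to which the rigidity of the positive mass theorem still applies. This is exactly handled by the positive mass theorem for metrics with corners along a minimal hypersurface, and the totally geodesic property supplied by Proposition \ref{lemmaf} is what makes the requisite matching of boundary data hold.
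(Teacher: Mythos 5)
Your argument has a genuine gap at the step that makes the doubling legitimate: you justify total geodesy of $\partial M$ by citing Proposition \ref{lemmaf} (``equivalently the first item of Theorem \ref{propositionsplitting}''), but Proposition \ref{lemmaf} assumes $f>0$ on the surface and Theorem \ref{propositionsplitting} assumes $f$ is not identically zero on it, while Theorem \ref{teo1} --- which you invoke in the same breath --- says precisely that $f\equiv 0$ on $\partial M$. So the hypotheses of the results you cite fail exactly on the horizon, and neither delivers total geodesy there. (That nondegenerate electrovacuum horizons are totally geodesic is true, but it comes from the external literature, e.g.\ the references the paper quotes for minimality, not from the paper's own propositions.) A second, smaller soft spot: Theorem \ref{teo1} as proved gives the inclusion $\partial M\subseteq\{f=0\}$, not $\{f=0\}\subseteq\partial M$, so your density claim used to get $R\geq 0$ from \eqref{scalarcurv} on all of $M$ also needs an argument rather than a citation. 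The gap is repairable without total geodesy: since $\partial M$ is minimal, the mean curvatures from the two sides of the gluing locus of the double both vanish, which is exactly the matching condition required in the positive mass theorem with corners (Miao, Shi--Tam), and the rigidity statement in that low-regularity setting (zero mass implies flat, McFeron--Sz\'ekelyhidi) then produces your contradiction; but as written, the regularity step rests on a hypothesis you have not established.

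It is also worth noting that your route, even once repaired, is structurally very different from --- and much heavier than --- the paper's. The paper does not use any positive mass theorem: it integrates \eqref{lapla} over $M$, uses $f=0$ on $\partial M$ (Theorem \ref{teo1}) and the zero-mass asymptotics $f=1+o(r^{-(n-1)})$ with derivatives $o(r^{-n})$ to kill the boundary term and the flux at infinity, and concludes $0\leq 2\left(\frac{n-2}{n-1}\right)\int_M|\nabla\psi|^2\,dV=-\int_M|\nabla f|^2\,dV\leq 0$. Hence $f$ is constant and $\psi$ is constant, so $g$ is Ricci-flat by \eqref{principaleq}, hence flat in dimension three, forcing $(M,g)$ to be Euclidean --- contradicting the existence of the compact minimal boundary. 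That argument is self-contained, elementary, and uses the electrostatic equations directly, whereas yours outsources all the work to corner PMT rigidity; what your approach would buy, if completed, is that it only uses $R\geq 0$ and minimality of the boundary, so it would apply verbatim to any asymptotically flat manifold with horizon boundary, not just electrostatic ones.
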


	In \cite{huang2018}, considering a static extension the authors were able to prove the nonexistence of closed stable minimal surfaces in a static vacuum manifold subject to a certain boundary data. In what follows, we define the boundary data that ensures the existence of a static extension for a static vacuumm space-time (cf. equation \eqref{vacuum}) and then we generalize \cite[Theorem 12]{huang2018} to an extremal electrostatic manifold. In our proof, the conformal geometry plays an important role.
	\begin{definition}[{\bf Static Extension}]
		Given a Riemannian metric $\gamma$ and a function $H$ on a $2$-sphere, we say that an asymptotically flat three-manifold $(M^{3},\,g)$ with interior boundary $\partial M$ is a static extension subject to the boundary data $(\gamma,\,H)$ if
		\begin{enumerate}
			\item $\partial M$ is diffeomorphic to the $2$-sphere, and the induced metric from $g$ on $\partial M$ is isometric to $\gamma$.
			\item The mean curvature of $\partial M$ with respect to the unit normal vector $\nu$ on $\partial M$ pointing to $(M,\,g)$ is given by $H$.
			\item $(M,\,g)$ admits a bounded lapse function, i.e., the extension is a static manifold
with a bounded static potential.
		\end{enumerate}
	\end{definition}
	
 Since we are considering here electrostatic manifolds $(M^3,\,g)$ satisfying Definition \ref{defA} to avoid any technical problem in our result concerning the existence of a static extension (cf. \cite{miao2003} for more details about static extension), here we will assume the existence of a pair $(\gamma,\,H)$, of a Riemannian metric $\gamma$ and a function $H$, such that 
	$$g\big|_{\partial M}=\gamma\quad\mbox{and}\quad H(\partial M,\,g) = H,$$
	where $H(\partial M,\,g)$ represents the mean curvature of $\partial M$ with respect to $g$.
	
	It is important to point out that for the following results we consider the same conditions over the scalar curvature assumed in Theorem \ref{propositionsplitting} (cf. Remark \ref{remarkcurvature}). Moreover, for the next result assume $g$ analytic on $M.$
	
	\begin{theorem}\label{tabom}
		Let $(M^3,\,g,\,f,\,\psi)$ be a three-dimensional extremal solution for the electrostatic system having a boundary $\partial M$. Suppose the existence of a static extension subject to the boundary data $(\gamma,\,H)$ satisfying $$f^{2}H+2f\frac{\partial{ f}}{\partial\nu} > 0\quad\mbox{and}\quad R(\gamma)-\frac{1}{4}H^{2}\geq3\left(\frac{\partial \log f}{\partial\nu}\right)^{2},$$ where  $R(\gamma)$ and $\nu$ are, respectively, the Gauss curvature of $\gamma$ and the normal vector field of $\partial M$. Then any static extension subject to the boundary data $(\gamma,\,H)$ does not have closed locally area minimizing surfaces.
	\end{theorem}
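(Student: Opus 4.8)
The plan is to reduce the extremal electrostatic problem to a scalar-flat one by a conformal change, and then to run a splitting-plus-unique-continuation argument against the mean-convexity hidden in the hypotheses. First I would exploit extremality. Since $f=1-\psi$ gives $\nabla\psi=-\nabla f$ and hence $|\nabla\psi|^{2}=|\nabla f|^{2}$, equation \eqref{lapla} with $n=3$ reduces to $\Delta f=|\nabla f|^{2}/f$, so that $\Delta\log f=\Delta f/f-|\nabla f|^{2}/f^{2}=0$; thus $\log f$ is harmonic. Combined with \eqref{scalarcurv} this yields $R=2|\nabla\psi|^{2}/f^{2}=2|\nabla\log f|^{2}$. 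Substituting $\Delta\log f=0$ and $|\nabla\log f|^{2}=R/2$ into the three-dimensional conformal law $\bar R=f^{-2}\bigl(R-4\Delta\log f-2|\nabla\log f|^{2}\bigr)$ for the metric $\bar g:=f^{2}g$ gives $\bar R=0$. Hence $(M,\bar g)$ is scalar flat; this is the conformal geometry on which the argument rests.

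Next I would translate the two boundary hypotheses into statements about $\bar g$. Under $\bar g=f^{2}g$ the mean curvature transforms as $\bar H=f^{-2}\bigl(fH+2\,\partial f/\partial\nu\bigr)$, so that $f^{2}H+2f\,\partial f/\partial\nu=f^{3}\bar H$. Dividing the first inequality by $f^{3}>0$, it becomes exactly the assertion that $\partial M$ is \emph{strictly mean-convex} in the scalar-flat metric $\bar g$, i.e. $\bar H>0$. For the second inequality I would use $R=2|\nabla\log f|^{2}$ together with the Gauss equation along $\partial M$: the condition $R(\gamma)-\tfrac14 H^{2}\ge 3\bigl(\partial\log f/\partial\nu\bigr)^{2}$ is the boundary form of the scalar-curvature comparison $R\ge 3R_{\Sigma}$ (equivalently $R_{\Sigma}\le\tfrac13R$ of Remark \ref{remarkcurvature}) required by Theorem \ref{propositionsplitting}; its role is to license the splitting theorem along any closed locally area minimizing surface that might occur.

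Then I would argue by contradiction. Suppose a static extension carries a closed locally area minimizing surface and let $\Sigma$ be an outermost such surface relative to $\partial M$. Being locally area minimizing it is stable and minimal, so Theorem \ref{propositionsplitting} applies: $\Sigma$ is totally geodesic, $R\equiv R_{\Sigma}\equiv 0$, and a collar $U\cong\Sigma\times[0,\varepsilon)$ splits as a flat Riemannian product, while the stability inequality with test function $1$ together with Gauss-Bonnet forces $\chi(\Sigma)=0$, so $\Sigma$ is a flat torus. Since $R\equiv 0$ on $U$ and $R=2|\nabla\log f|^{2}$, we get $\nabla\log f\equiv 0$, so $f$ is constant on $U$, and the collar is foliated by totally geodesic, hence minimal ($\bar H=0$), leaves $\Sigma_{t}$. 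Finally I would propagate this structure toward the boundary: because $\log f$ is harmonic, unique continuation prevents $\nabla\log f$ from vanishing on an open set unless $f$ is globally constant, and the analyticity of $g$ carries the totally geodesic foliation across the region between $\Sigma$ and $\partial M$. The foliation cannot terminate before $\partial M$, since its limit leaf would be a minimal surface strictly outside $\Sigma$, contradicting outermostness; hence it reaches $\partial M$, forcing $\partial M$ to be a totally geodesic leaf with $\bar H=0$. This contradicts the strict mean-convexity $\bar H>0$ obtained from the first hypothesis, and the contradiction rules out the surface.

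I expect the genuine difficulty to lie in this last step: making the extension of the product structure from the collar $U$ up to $\partial M$ rigorous, namely controlling where analyticity and unique continuation may be invoked and excluding the possibility that the totally geodesic foliation degenerates before meeting the boundary. A secondary technical point will be to verify that the conformal change $\bar g=f^{2}g$ transports the ``locally area minimizing'' property and the scalar-curvature comparison into precisely the form needed for Theorem \ref{propositionsplitting}, and that the final contradiction with $\bar H>0$ is stated with the correct orientation of the unit normal $\nu$.
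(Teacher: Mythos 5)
Your proposal correctly identifies the general shape of the argument (contradiction via Theorem \ref{propositionsplitting} plus analyticity), but it has a genuine gap at its center: you never prove that $f>0$ in the interior of $M$, and this is precisely what the two boundary inequalities are for. In the paper's proof, one works with the conformal metric $\bar g=f^{-2}g$, in which extremality makes $f$ harmonic; the maximum principle and Hopf's lemma are then applied at a hypothetical boundary minimum $y$ with $f(y)\le 0$, and the identity \eqref{verificar} together with \emph{both} hypotheses (the first gives $\overline{H}>0$, the second makes the bracket in \eqref{verificar} nonpositive, after using $2|A|^2\ge H^2$) yields a sign contradiction, forcing $\inf_M f>0$. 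You instead read the second inequality as ``the boundary form of $R\ge 3R_\Sigma$'' needed for Theorem \ref{propositionsplitting}; that is a misreading --- the curvature conditions of Theorem \ref{propositionsplitting} concern the \emph{interior} surface $\Sigma$ and are assumed separately as standing hypotheses (see the paragraph before the theorem and Remark \ref{remarkcurvature}), while the boundary inequality's actual role is the positivity of $f$. Without $f>0$ your argument cannot start: a static extension is only assumed to have a \emph{bounded} lapse, so $f$ could a priori vanish or change sign inside; $\log f$, the identity $R=2|\nabla\log f|^2$, the scalar-flatness of $f^2g$, and above all the application of Theorem \ref{propositionsplitting} (whose proof builds the normal exponential map of $f^{-2}g$ and explicitly assumes $f>0$ on $\Sigma$) are all unavailable where $f\le 0$.

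The second problem is your endgame. Once Theorem \ref{propositionsplitting} gives $Ric\equiv 0$ on a collar $U$ of the hypothetical surface, the paper finishes in one line: $g$ is analytic, so $Ric\equiv 0$ on all of $M$; in dimension three this means $(M,g)$ is flat, hence (being asymptotically flat) an exterior region of Euclidean space, which contains no closed minimal surfaces --- contradicting the existence of $\Sigma$ itself, with no need to bring $\partial M$ or $\overline{H}>0$ back into play. Your alternative --- propagating a totally geodesic foliation from $U$ up to $\partial M$ and contradicting mean-convexity there --- does not hold together as stated: the collar produced by Theorem \ref{propositionsplitting} has no controlled direction (it may point inward), the ``outermost'' reduction is self-defeating (the leaves $\Sigma_t$ are themselves closed minimal surfaces, so if the collar points outward, outermostness is violated immediately rather than at some limit leaf), and analyticity of $g$ by itself gives no mechanism forcing a foliation to extend, with compact embedded leaves, across the whole region between $\Sigma$ and $\partial M$. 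If you want to keep your unique-continuation idea ($\nabla\log f\equiv 0$ on $U$ plus harmonicity of $\log f$ forces $f$ constant, hence $\psi$ constant and $Ric\equiv 0$ by \eqref{principaleq}), that also works as a finish, but it again requires knowing $f>0$ on all of $M$ first --- which returns you to the missing step above.
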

	
	The proof of the above theorem is closely related to the upper bound estimate for the ADM mass (cf. \cite[Proposition 3]{miao2005}). Therefore, we will now prove such estimate for an extremal three dimensional static Einstein-Maxwell space-time. As a consequence of such estimate, we can conclude a rigidity result for the extremal electrostatic manifold. Moreover, to understand Bartnik's quasi-local mass \cite{bartnik1989} of a boundary surface, it is interesting to estimate the ADM mass of a given static and asymptotically flat manifold in terms of its boundary data $(\gamma,\,H)$.
	
	To do that remember, a constant mean curvature surface (CMC) $\Sigma$ is mean-stable if 
	$$\int_{\Sigma}\{|\nabla\Psi|^{2}-[Ric(\nu,\,\nu)+|A|^{2}]\Psi^{2}\}d\sigma\geq0,$$
	for any function $\Psi$ such that $\int_{\Sigma}\Psi = 0.$ Here, $A$ stands for the second fundamental form of $\Sigma$ with normal vector $\nu$. 
	
	Throughout history, many definitions of quasi-local mass were introduced by different authors (cf. \cite{bartnik1989} and the references therein). Hawking defined a quasi-local mass which has several desirable physical properties (cf. \cite{Christodoulou1986}). This quantity plays an important role in the next theorem. Let us remember that the {\it Hawking quasi-local mass} of a surface $\Sigma$ is given by
	\begin{eqnarray}\label{mh}
	m_{\mathcal{H}}(\Sigma) = \sqrt{\frac{|\Sigma|}{16\pi}}\left(1 - \frac{1}{16\pi}\int_{\Sigma}H^{2}d\sigma\right),
	\end{eqnarray}
	where $|\Sigma|$ is the area of $\Sigma$ and $H$ its mean curvature.

	\begin{theorem}\label{tabom1}
		Let $(M^3,\,g,\,f,\,\psi)$ be a three-dimensional asymptotically flat extremal solution for the electrostatic system. Consider that the interior boundary $\partial M$ is a mean-stable hypersurface such that 
		\begin{eqnarray}\label{condruim}
		f^{2}H +2f\frac{\partial f}{\partial\nu}=\varepsilon\quad\mbox{and}\quad K-\frac{1}{4}H^{2}\geq3\left(\frac{\partial \log f}{\partial\nu}\right)^{2}
		\end{eqnarray}
		where $\varepsilon$ is a positive constant and $K$ is the Gauss curvature of $\partial M$, with normal vector $\nu$ pointing towards the end of the manifold. Then, $$m\leq\frac{4}{\kappa}m_{\mathcal{H}}(\partial M)\sqrt{\dfrac{16\pi}{|\partial M|\varepsilon^{2}}},$$
		where $m$ is the ADM mass, $|\partial M|$ is the area of $\partial M$ and $m_{\mathcal{H}}(\partial M)$ is the Hawking quasi-local mass of $\partial M$. Here, $\kappa>0$ represents the number of ends of the manifold.
	\end{theorem}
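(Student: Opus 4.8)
The plan is to turn the ADM mass into a boundary flux and then bound that flux using the two hypotheses in \eqref{condruim} together with mean-stability. The crucial preliminary observation is that, in the extremal three-dimensional case, $u:=\log f$ is harmonic: with $\psi=1-f$ one has $\nabla\psi=-\nabla f$, so the divergence constraint in Definition \ref{defA} becomes $\mathrm{div}(\nabla f/f)=\Delta\log f=0$. Since $f=1-m r^{-1}+o(r^{-2})$ on each end, $u=-m r^{-1}+o(r^{-2})$, whence $\tfrac{1}{4\pi}\int_{S_r}\partial_\nu u\,d\sigma\to m$ on each of the $\kappa$ ends. Integrating $\Delta u=0$ over the region between $\partial M$ and large coordinate spheres (with $\nu$ pointing toward the ends) annihilates the bulk term and gives the exact identity
\[ 4\pi\kappa\,m=\int_{\partial M}\partial_\nu\log f\,d\sigma, \]
so that everything reduces to an upper bound for this boundary flux.

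Next I would record two consequences of the geometry on $\partial M$. From \eqref{scalarcurv}, $R=2|\nabla\log f|^2\ge 0$, so in particular $R\ge 2(\partial_\nu\log f)^2$ along $\partial M$. Combining the Gauss equation $2K=R-2\,Ric(\nu,\nu)+H^2-|A|^2$ with $|A|^2\ge\tfrac12H^2$ and testing the mean-stability inequality against the Hersch-balanced conformal coordinates $x_1,x_2,x_3$ of $\partial M$ (for which $\int_{\partial M}x_i\,d\sigma=0$, $\sum_i x_i^2=1$, and $\int_{\partial M}\sum_i|\nabla x_i|^2\,d\sigma=8\pi$) yields $\int_{\partial M}(Ric(\nu,\nu)+|A|^2)\,d\sigma\le 8\pi$; invoking $\partial M\cong S^2$ (from the static-extension setup) and Gauss--Bonnet $\int_{\partial M}K\,d\sigma=4\pi$, this gives $\int_{\partial M}H^2\,d\sigma\le 16\pi$, i.e. $m_{\mathcal{H}}(\partial M)\ge 0$, which fixes the sign in the statement.

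The core of the proof is to establish
\[ \varepsilon\int_{\partial M}\partial_\nu\log f\,d\sigma\le 16\pi-\int_{\partial M}H^2\,d\sigma. \]
Here I would use both hypotheses in \eqref{condruim}: the first, written as $f^2(H+2\partial_\nu\log f)=\varepsilon$, gives the pointwise identity $\partial_\nu\log f=\tfrac12(\varepsilon f^{-2}-H)$ and thereby introduces the constant $\varepsilon$; the second, integrated with $\int K=4\pi$, gives $16\pi-\int H^2\ge 12\int(\partial_\nu\log f)^2\,d\sigma$. Coupling the pointwise identity with this quadratic constraint (morally a pointwise estimate $\varepsilon\,\partial_\nu\log f\le 4K-H^2$) is what produces the displayed inequality. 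Once it is in hand, I rewrite $16\pi-\int H^2=16\pi\,m_{\mathcal{H}}(\partial M)\sqrt{16\pi/|\partial M|}$ from \eqref{mh} and substitute into $4\pi\kappa m=\int\partial_\nu\log f$, which reorganizes precisely into $m\le\tfrac{4}{\kappa}m_{\mathcal{H}}(\partial M)\sqrt{16\pi/(|\partial M|\varepsilon^2)}$.

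I expect this last estimate to be the main obstacle. A direct Cauchy--Schwarz bound $\int\partial_\nu\log f\le(|\partial M|\int(\partial_\nu\log f)^2)^{1/2}$ only produces a bound of order $\sqrt{m_{\mathcal{H}}}$, and because the first condition in \eqref{condruim} is scale-homogeneous the factor $\varepsilon$ tends to cancel; obtaining the stated first-power dependence on $m_{\mathcal{H}}$ with the explicit $1/\varepsilon$ forces one to use the extremal identity $\partial_\nu\log f=\tfrac12(\varepsilon f^{-2}-H)$ and the precise constant $3$ in \eqref{condruim} in a coordinated way, which is the delicate point. The secondary technical issues are the $\kappa$-ends bookkeeping in the flux identity and the justification that $\partial M$ is a topological sphere, needed so that $\int_{\partial M}K\,d\sigma=4\pi$ and the Hawking mass carries the correct sign.
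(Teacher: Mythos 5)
Your reduction is sound as far as it goes: the flux identity $4\pi\kappa m=\int_{\partial M}\partial_\nu\log f\,d\sigma$ is correct (in the extremal case $\log f$ is indeed $g$-harmonic), the Christodoulou--Yau bound $\int_{\partial M}\left(Ric(\nu,\nu)+|A|^2\right)d\sigma\le 8\pi$ is the same mean-stability input the paper uses, and the algebra that converts $\varepsilon\int_{\partial M}\partial_\nu\log f\,d\sigma\le 16\pi-\int_{\partial M}H^2\,d\sigma$ into the stated mass bound is right. But your proof stops exactly where it has to start: that central inequality is never established, and the ``moral'' pointwise estimate $\varepsilon\,\partial_\nu\log f\le 4K-H^2$ you hope to extract from \eqref{condruim} does not follow from the hypotheses. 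Writing $\varepsilon\,\partial_\nu\log f=f^2\left(H+2\partial_\nu\log f\right)\partial_\nu\log f$, the second condition in \eqref{condruim} only gives $4K-H^2\ge 12\left(\partial_\nu\log f\right)^2$, and there is no pointwise comparison between $f^2H\,\partial_\nu\log f+2f^2\left(\partial_\nu\log f\right)^2$ and $12\left(\partial_\nu\log f\right)^2$ without control on $f$ and on the sign and size of $H$ and $\partial_\nu\log f$; your own closing paragraph concedes this is unresolved. So the core step of the theorem is missing.

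The idea you are missing is that $\varepsilon$ should not be coupled to the flux by hand: it enters through the conformal metric $\bar g=f^{-2}g$. The first condition in \eqref{condruim} says precisely that $\partial M$ has constant mean curvature $\overline{H}=f^2H+2f\frac{\partial f}{\partial\nu}=\varepsilon$ with respect to $\bar g$, and in the extremal case $f$ is $\bar g$-harmonic (equation \eqref{pdg}). The paper decomposes $0=\overline{\Delta}f=\overline{\Delta}_{\partial M}f+\overline{H}\frac{\partial f}{\partial\nu}+\overline{\nabla}^2f(\nu,\nu)$ along the boundary; after dividing by $f$, passing to $V=\log f$, integrating against the conformal measure, and using the Gauss equation, this converts $\varepsilon\int_{\partial M}f^{-2}\frac{\partial V}{\partial\nu}\,d\sigma$ --- note the $f^{-2}$ weight, i.e.\ the $\bar g$-flux rather than your unweighted one --- into boundary curvature integrals (equation \eqref{pes1}). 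The conformal flux identity \eqref{pes2} then identifies this weighted flux with $4m\kappa\varepsilon\pi$ plus the nonnegative bulk term $\varepsilon\int_M|\nabla V|^2\,d\sigma$, which is simply discarded since it has the favorable sign; the Christodoulou--Yau bound, Gauss--Bonnet, and the extremal relation $R=2|\nabla V|^2$ from \eqref{scalarcurv} then close the estimate. In this scheme the second condition in \eqref{condruim} is not used as an integrated quadratic constraint at all: its role is to run the maximum-principle argument of Theorem \ref{tabom} guaranteeing $f>0$. In short, your unweighted flux identity, though true, is not the one that couples to $\varepsilon$; replacing it by the boundary decomposition of $\overline{\Delta}f=0$ in the metric $\bar g$ is what produces the inequality you could not prove.
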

	
	\begin{remark}
		We should point out that the condition \eqref{condruim} is important to guarantee that $f>0$ in $M$. This fact will be clarified in the proofs.
	\end{remark}

	\begin{remark}The photon spheres are closely related to constant mean curvature surfaces in the static space-time (cf. \cite[Proposition 2.5]{cederbaum2017} and \cite[Proposition 16]{sophia2019}). We also recommend to the reader to see the work of Virbhadra and Ellis about the photon sphere (cf. \cite{claudel2001} and the references therein). They are regions where light can be confined in closed orbits. Moreover, photon spheres are related to the existence of relativistic images in the context of gravitational lensing. So, Theorem \ref{tabom1} can be useful in the understanding of photon spheres in the extremal static Einstein-Maxwell theory. Here, we can define a photon surface (or photon sphere) as a surface $\Sigma$ with CMC in which $f$ is constant at $\Sigma$.
	
	Now, from Theorem \ref{tabom1} we can provide a rigidity result for the extremal electrostatic manifold. In fact, as we already said before an extremal solution has a unique photon sphere $\Sigma$. Let us say that such photon sphere $\Sigma=\partial M$ is mean-stable with $H=2$ and isometric to the standard unit sphere $\mathbb{S}^{2}\subset\mathbb{R}^{3}$. Under the hypothesis of Theorem \ref{tabom1}, $f$ will be locally a constant function (since $f$ is constant at $\Sigma$ by definition and $\nu(f)=0$ from the hypothesis) and so $(U,\,g)$, $U\subseteq M^3$, is Ricci-flat. Moreover, the Hawking mass \eqref{mh} will be identically zero. Thus, assuming the positive mass theorem holds, we get $m=0=q$ everywhere in $M^3$. Therefore, we must have $(M^3,\,g)$ isometric to Euclidean space with standard metric, by the rigidity part of the positive mass theorem. Thus, the above theorem can lead us towards the classification of electrostatic manifolds (cf. \cite[Corollary 2]{miao2005}).
	\end{remark}

	\section{Background}

	We start this section with a result concerning the existence of stable minimal hypersurfaces in an $n$-dimensional electrostatic manifold. This result is inspired by the one proved in \cite[Lemma 4]{huang2018}. The following result is interesting by itself and it was the inspiration to pursue Theorem \ref{propositionsplitting}. Therefore, we have decided to include its proof here. Moreover, Proposition \ref{lemmaf} and Theorem \ref{propositionsplitting} are closely related to \cite[Theorem 1]{liu2013}.

	\begin{proposition}\label{lemmaf}
		Let $(M^{n},\,g,\,f,\,\psi)$, $n\ge 3$, be an $n$-dimensional solution for the static Einstein-Maxwell equations. Let $\Sigma$ be a  closed, connected, stable minimal hypersurface in $M$. Suppose $f>0$ on $\Sigma$. Then, $\Sigma$ is totally geodesic.
	\end{proposition}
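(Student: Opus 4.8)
The plan is to use the stability inequality for $\Sigma$ together with the electrostatic equations to force the second fundamental form to vanish. The standard tool for a closed stable minimal hypersurface $\Sigma^{n-1} \subset M^n$ is the second variation / stability inequality
\begin{eqnarray*}
\int_{\Sigma}\{|\nabla\varphi|^{2}-[Ric(\nu,\,\nu)+|A|^{2}]\varphi^{2}\}d\sigma\geq0
\end{eqnarray*}
for all $\varphi\in C^{\infty}(\Sigma)$. The idea is to choose a clever test function $\varphi$ built out of the lapse $f$ (which is positive on $\Sigma$ by hypothesis), and to translate $Ric(\nu,\nu)$ on the right-hand side into information coming from the electrostatic system \eqref{principaleq}. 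Since $\Sigma$ is minimal and totally geodesic is exactly the statement $A\equiv0$, the goal is to show that the stability inequality can hold only when $|A|^{2}\equiv0$.

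First I would restrict the fundamental equation \eqref{principaleq} to $\Sigma$ and evaluate it in the normal direction $\nu$. Contracting \eqref{principaleq} with $\nu\otimes\nu$ gives
\begin{eqnarray*}
fRic(\nu,\nu)=\nabla^{2}f(\nu,\nu)-\frac{2}{f}(\nu\psi)^{2}+\frac{2}{(n-1)f}|\nabla\psi|^{2},
\end{eqnarray*}
so $Ric(\nu,\nu)$ is expressed in terms of $f$, $\psi$ and the Hessian of $f$ along $\nu$. Next I would use the minimality of $\Sigma$ to relate $\nabla^{2}f(\nu,\nu)$ to the Laplacian of $f$ restricted to $\Sigma$: for a minimal hypersurface one has $\Delta_{M}f = \Delta_{\Sigma}f + \nabla^{2}f(\nu,\nu) + H\,\nu(f)$, and since $H=0$ the middle term is $\nabla^2 f(\nu,\nu) = \Delta_M f - \Delta_\Sigma f$. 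Combining this with the Laplacian equation \eqref{lapla} converts the Hessian term into the ambient scalar data $|\nabla\psi|^2/f$ plus a tangential Laplacian $\Delta_\Sigma f$.

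The natural test function is $\varphi = f$ itself (or a suitable power of $f$), which is legitimate since $f>0$ on $\Sigma$. Plugging $\varphi=f$ into the stability inequality, integrating the $\Delta_\Sigma f$ term by parts against $f$ on the closed manifold $\Sigma$, and substituting the expression for $Ric(\nu,\nu)$ obtained above, I expect the $|\nabla\psi|^2$ and $(\nu\psi)^2$ contributions to combine favorably (with a sign determined by the factor $\frac{n-2}{n-1}>0$ and the scalar curvature relation \eqref{scalarcurv}). The outcome should be an inequality of the form $\int_\Sigma f^2|A|^2\,d\sigma \leq 0$, which forces $A\equiv 0$ since $f>0$.

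The main obstacle will be the bookkeeping of the gradient and Hessian terms: one must carefully separate $|\nabla\psi|^2$ into its tangential and normal parts $|\nabla^\Sigma\psi|^2 + (\nu\psi)^2$, track every factor of $f$ and $(n-1)$, and verify that the cross terms arising from integrating $|\nabla f|^2$ against $\Delta_\Sigma f$ do not spoil the sign. A secondary subtlety is checking that no boundary terms survive — guaranteed here because $\Sigma$ is closed — and confirming that the coefficient in front of $|A|^2$ is strictly the right sign after all substitutions, so that totally geodesic (rather than merely a weaker curvature bound) is the correct conclusion.
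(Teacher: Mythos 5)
Your proposal is correct and follows essentially the same route as the paper's proof: test function $\varphi=f$ in the stability inequality, the decomposition $\Delta f=\Delta_\Sigma f+\nabla^2 f(\nu,\nu)$ on the minimal $\Sigma$, and equations \eqref{principaleq}--\eqref{lapla} to get the key pointwise sign $(\Delta_\Sigma+Ric(\nu,\nu))f=\frac{2}{f}\left(|\nabla\psi|^2-\langle\nabla\psi,\nu\rangle^2\right)\geq 0$, which combined with stability forces $\int_\Sigma f^2|A|^2\,d\sigma\leq 0$. The bookkeeping you flag as the main obstacle indeed works out exactly as you predict, since the coefficients $\frac{n-2}{n-1}$ and $\frac{1}{n-1}$ sum to $1$ and Cauchy--Schwarz handles the normal--tangential split of $|\nabla\psi|^2$.
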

	\begin{proof}
		From the stability of $\Sigma$, for any $\phi\in C^1(\Sigma)$, we have
		\begin{eqnarray*}
			\int_\Sigma |\nabla_\Sigma \phi|^2 \, d\sigma\ge \int_\Sigma\left( |A|^2 + Ric(\nu, \nu)\right) \phi^2\, d\sigma \ge \int_\Sigma Ric(\nu, \nu) \phi^2\, d\sigma, \end{eqnarray*}
		where $\nu$ is the unit normal vector field to $\Sigma$ and $d\sigma$ is the $(n-1)$-volume measure of hypersurfaces. Therefore, since $\Sigma$ is closed and $f$ is a smooth function on $M$ we get
		\begin{eqnarray}\label{operadorsinal1}
		& &\int_\Sigma |\nabla_\Sigma f|^2 \, d\sigma  \ge \int_\Sigma Ric(\nu, \nu) f^2\, d\sigma \nonumber\\
		&\Rightarrow &\int_\Sigma f [\Delta_\Sigma f + Ric(\nu, \nu) f]\, d\sigma \le 0
		\Rightarrow \int_\Sigma f \bigg[\bigg(\Delta_\Sigma + Ric(\nu, \nu)\bigg) f\bigg]\, d\sigma \le 0. 
		\end{eqnarray}

		On the other hand, since $H_\Sigma=0$ by hypothesis, we have (see \cite[p. 60]{galloway1993} and \cite[p. L55]{miao2005})
		\begin{eqnarray*}\label{laplacianos}
			\triangle f=\triangle_\Sigma f+\nabla^2f(\nu,\nu).
		\end{eqnarray*}
		Hence, by Cauchy-Schwarz inequality we  get
		\begin{eqnarray}\label{operadorsinal2}
		&\Rightarrow & \underbrace{\triangle f}_\eqref{lapla}=\triangle_\Sigma f+\underbrace{\nabla^2f(\nu,\nu)}_{\eqref{principaleq}} \nonumber \\
		&\Rightarrow & 2\left(\frac{n-2}{n-1}\right)\frac{|\nabla\psi|^{2}}{f}=\triangle_\Sigma f+fRic(\nu,\nu)+ \frac{2}{f}\langle\nabla\psi,\,\nu\rangle^{2}-\frac{2|\nabla\psi|^{2}}{(n-1)f} \nonumber\\
		&\Rightarrow & (\triangle_\Sigma +Ric(\nu,\nu))f=2f^{-1}(|\nabla\psi|^{2}-\langle\nabla\psi,\,\nu\rangle^{2})\geq 0.
		\end{eqnarray}
		
		From \eqref{operadorsinal1} and \eqref{operadorsinal2} it follows that $(\triangle_\Sigma +Ric(\nu,\nu))f=0.$ Again, from the stability inequality we get
		\begin{eqnarray*}
			&\Rightarrow & \int_\Sigma |\nabla_\Sigma f|^2 \, d\sigma-\int_\Sigma Ric(\nu,\nu) f^2 \, d\sigma \ge \int_\Sigma |A|^2  f^2\, d\sigma \\
			&\Rightarrow & -\int_\Sigma f\triangle_\Sigma f \, d\sigma-\int_\Sigma Ric(\nu,\nu) f^2 \, d\sigma \ge \int_\Sigma |A|^2  f^2\, d\sigma \\
			&\Rightarrow & -\int_\Sigma f\underbrace{\bigg(\triangle_\Sigma f+Ric(\nu,\nu) f\bigg)}_{=0} \, d\sigma \ge \int_\Sigma |A|^2  f^2\, d\sigma \\
			&\Rightarrow & 0 \ge \int_\Sigma |A|^2  f^2\, d\sigma.
		\end{eqnarray*}
		Thus, $|A|=0$ and $\Sigma$ is totally geodesic.

	\end{proof}

	The proof of the next lemma is based in \cite[Proposition 3.2]{Brendle2013}. We extended its ideas to an
	$n$-dimensional electrostatic manifold $(M^n,\,g,\,f,\,\psi)$. Let $\Sigma$ be a two-sided smooth hypersurface in $U\subseteq M^n$ in which $f>0$. Let $\nu$ denote the outward-pointing unit normal to $\Sigma$. Consider the conformally modified metric $\bar{g}=\frac{1}{f^2} g$. Furthermore, we can infer the existence of $\varepsilon>0$ in which $\Phi: \Sigma \times [0,\varepsilon) \rightarrow U$, the normal exponential map with respect to the metric $\bar{g}$, is well-defined and such that $f>0$ in $\Sigma_t = \Phi(\Sigma\times\{t\})$. Precisely, for each point $x\in \Sigma$ the curve $\gamma_x(t)=\Phi(x,t)$ is a geodesic with respect to  $\bar{g}$ such that
	\begin{equation*}
	\Phi(x,0)=x \mbox{ and } \Phi(x,t)=exp_x(-tf(\gamma(t))\nu).
	\end{equation*}
	Moreover,
	\begin{eqnarray}\label{expflow}
	\frac{\partial \Phi}{\partial t}(x,t)\bigg|_{t=0}=-f(x)\nu(x).
	\end{eqnarray}
	Therefore, $\Sigma_t$ is the hypersurface obtained by pushing out along the normal geodesics to $\Sigma$ in the metric $\bar{g}$ a signed distance $t$. Note that the geodesic $\gamma$ has unit speed with respect to $\bar{g}$. Let 
	$H(x, t)$ and $A(x, t)$ be the mean curvature and second fundamental form of $x\in\Sigma_t$ with
	respect to $\nu$ in the metric $g$ (see also \cite[pg. 60]{galloway1993}).
	
	\begin{lemma} \label{lemma:monotonicity}
		Let $(M^{n},\,g,\,f,\,\psi)$, $n\ge 3$, be an $n$-dimensional solution for the static Einstein-Maxwell equations. The mean curvature and second fundamental form of $\Sigma_t$ satisfy the following differential inequality
		\begin{equation*}
		\frac{d}{dt}\left(\frac{H}{f} \right)\ge |A|^2.   
		\end{equation*}
	\end{lemma}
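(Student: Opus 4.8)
The plan is to translate the geodesic flow defined through the conformal metric $\bar g=f^{-2}g$ into an ordinary normal variation with respect to $g$, and then to differentiate $H/f$ directly, using the standard first-variation formula for mean curvature together with the field equations \eqref{principaleq} and \eqref{lapla}. First I would record the geometric meaning of the flow. Since the curves $\gamma_x(t)=\Phi(x,t)$ are unit-speed geodesics for $\bar g$ issuing orthogonally from $\Sigma$, the Gauss lemma for $\bar g$ together with the conformal invariance of angles shows that each $\gamma_x$ meets $\Sigma_t$ orthogonally in $g$ as well; moreover $\bar g(\gamma_x',\gamma_x')=1$ forces $g(\gamma_x',\gamma_x')=f^2$. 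Combined with the initial condition \eqref{expflow}, this means that, viewed as a variation in $(M,g)$, the flow is normal with velocity $V=\partial_t\Phi=-f\nu$; that is, the scalar speed is $\phi=-f$ and $|V|_g=f$. In particular $\frac{d}{dt}f(\gamma_x(t))=\langle\nabla f,V\rangle=-f\,\partial_\nu f$.

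Next I would invoke the standard evolution equation for the mean curvature under a normal variation $V=\phi\nu$,
$$\frac{\partial H}{\partial t}=-\Delta_{\Sigma_t}\phi-\bigl(|A|^2+Ric(\nu,\nu)\bigr)\phi,$$
whose linearized operator $\Delta_{\Sigma_t}+Ric(\nu,\nu)$ is exactly the one appearing in the proof of Proposition \ref{lemmaf}. With $\phi=-f$ this becomes $\partial_t H=\Delta_{\Sigma_t}f+(|A|^2+Ric(\nu,\nu))f$. Feeding this and $\partial_t f=-f\,\partial_\nu f$ into the quotient rule gives
$$\frac{d}{dt}\left(\frac{H}{f}\right)=\frac{\Delta_{\Sigma_t}f}{f}+|A|^2+Ric(\nu,\nu)+\frac{H\,\partial_\nu f}{f}.$$
I would then eliminate the tangential Laplacian through the decomposition $\Delta f=\Delta_{\Sigma_t}f+\nabla^2f(\nu,\nu)+H\,\partial_\nu f$ (the same identity used, in the minimal case, in the proof of Proposition \ref{lemmaf}); this cancels the $H\,\partial_\nu f$ term and reduces the right-hand side to $\tfrac{1}{f}\bigl(\Delta f-\nabla^2f(\nu,\nu)\bigr)+|A|^2+Ric(\nu,\nu)$.

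The final step is purely algebraic. I would substitute $f\,Ric(\nu,\nu)$ from \eqref{principaleq}, which contributes $\nabla^2f(\nu,\nu)-\tfrac{2}{f}\langle\nabla\psi,\nu\rangle^2+\tfrac{2}{(n-1)f}|\nabla\psi|^2$ and cancels the $\nabla^2f(\nu,\nu)$ terms, and then replace $\Delta f$ by \eqref{lapla}. The two $|\nabla\psi|^2$ coefficients combine as $\tfrac{2(n-2)}{n-1}+\tfrac{2}{n-1}=2$, leaving
$$\frac{d}{dt}\left(\frac{H}{f}\right)=|A|^2+\frac{2}{f^2}\bigl(|\nabla\psi|^2-\langle\nabla\psi,\nu\rangle^2\bigr)\ge|A|^2,$$
where the inequality is Cauchy--Schwarz, $\langle\nabla\psi,\nu\rangle^2\le|\nabla\psi|^2$.

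I expect the only genuinely delicate point to be the bookkeeping of sign and orientation conventions: verifying that the $\bar g$-geodesic flow really is the normal variation of $g$-speed $f$ with velocity $-f\nu$, and that the first-variation formula for $H$ is applied with the sign convention consistent with the Laplacian decomposition (both of which I would check against a model such as a round sphere in flat space). Once these are fixed, the two cancellations (of $H\,\partial_\nu f$ and of $\nabla^2f(\nu,\nu)$) and the coefficient arithmetic make the estimate fall out, with the surplus nonnegative term $\tfrac{2}{f^2}\bigl(|\nabla\psi|^2-\langle\nabla\psi,\nu\rangle^2\bigr)$ measuring precisely the contribution of the electric potential relative to the vacuum case.
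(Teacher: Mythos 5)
Your proposal is correct and takes essentially the same route as the paper: the same evolution equation $\frac{\partial H}{\partial t}=f|A|^2+\Delta_{\Sigma_t}f+fRic(\nu,\nu)$ for the flow with velocity $-f\nu$, the same identity $\frac{\partial f}{\partial t}=-f\langle\nabla f,\nu\rangle$, the same tangential/normal decomposition of $\Delta f$, and the same substitution of \eqref{principaleq} and \eqref{lapla} producing the nonnegative surplus $\frac{2}{f^2}\bigl(|\nabla\psi|^2-\langle\nabla\psi,\nu\rangle^2\bigr)$ handled by Cauchy--Schwarz. The only difference is the order of bookkeeping (the paper feeds the field equations into the evolution inequality before the quotient rule, while you apply the quotient rule first and substitute at the end), which changes nothing of substance.
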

	Moreover, if the mean curvature of $\Sigma_0$ is non negative then $H(\cdot,\,t)\geq0$ for all $t.$
	\begin{proof}
		Since $\Delta_{\Sigma_t} f=\Delta f-\nabla^2 f(\nu,\nu)-H\langle \nabla f,\nu\rangle$, from \eqref{principaleq} and \eqref{lapla} we have 
		\begin{eqnarray*}
			\Delta_{\Sigma_t}f+fRic(\nu,\nu)&=&\Delta f-\nabla^2f(\nu,\nu)-H\langle \nabla f,\nu\rangle+fRic(\nu,\nu) \nonumber\\
			&=&\frac{2(n-2)}{f(n-1)}|\nabla \psi|^2-fRic(\nu,\nu)-\frac{2}{f}\langle \nabla \psi,\nu \rangle^2+\frac{2}{f(n-1)}|\nabla \psi|^2\nonumber\\
			& &-H\langle \nabla f,\nu\rangle+fRic(\nu,\nu)\nonumber\\
			&=&\frac{2}{f}\underbrace{[|\nabla \psi|^2-\langle \nabla \psi, \nu\rangle^2]}_{\ge 0}-H\langle \nabla f,\nu\rangle\ge -H\langle \nabla f,\nu\rangle.
		\end{eqnarray*}
		Hence, the mean curvature of $\Sigma_t$ satisfies the evolution inequality (cf. \cite[Lemma 7.6]{bethuel2006})
		\begin{equation}\label{dercurvmed}
		\frac{\partial H}{\partial t}=f|A|^2+\Delta_{\Sigma_t}f+fRic(\nu,\nu)
		\ge f|A|^2-H\langle \nabla f,\nu\rangle .
		\end{equation}

		Moreover, from \eqref{expflow} we have 
		\begin{equation*}
		\frac{\partial f}{\partial t}=d(f\circ\Phi)(\partial_t)=df(\frac{\partial\Phi}{\partial t})=\langle\nabla f,\,\frac{\partial\Phi}{\partial t}\rangle=-f\langle \nabla f,\nu \rangle.
		\end{equation*}
		Thus,
		\begin{eqnarray*}
			\frac{d}{dt}\bigg(\frac{H}{f}\bigg)&=&-\frac{H}{f^2}\frac{\partial f}{\partial t}+\frac{1}{f}\underbrace{\frac{\partial H}{\partial t}}_{\eqref{dercurvmed}}\ge-\frac{H}{f^2}\frac{\partial f}{\partial t}+\frac{1}{f}\bigg[f|A|^2-H\langle \nabla f,\nu\rangle \bigg]\\
			&=&-\frac{H}{f^2}\frac{\partial f}{\partial t}+|A|^2-\frac{H}{f} \langle \nabla f, \nu \rangle =\frac{H}{f^2}f\langle \nabla f,\nu \rangle+|A|^2-\frac{H}{f} \langle \nabla f, \nu \rangle =|A|^2.
		\end{eqnarray*}
		Hence, $\frac{d}{dt}\bigg(\frac{H}{f}\bigg)\ge |A|^2.$ This implies that
		\begin{equation*}
		\frac{H}{f}(x,t)-\frac{H}{f}(x,0)\geq\int_0^t |A|^2(x,\,s)ds.
		\end{equation*}

		So, if the initial hypersurface $\Sigma$ has non negative mean curvature $H(\cdot,0)$, we conclude that the hypersurface $\Sigma_t$ has non negative mean curvature for each $t$.
	\end{proof}

	\section{Proof of the Main results}

	Before proceeding we must say that Proposition \ref{lemmaf} is very important to understand the first statement of Theorem \ref{propositionsplitting}. The proof of Theorem \ref{propositionsplitting} is extensive, so we have divided the proof in three parts. Thus, the reader will see that the major differences from \cite[Proposition 5]{huang2018} lies in the second part of the proof. 
	
	\begin{proof}[Proof of Theorem \ref{propositionsplitting}]
		
		\
		
		{\bf Proof of the first statement.} Assume $f>0$ on $\Sigma$. Consider the deformation $\Phi:\Sigma\times [0, \varepsilon) \to U$ given by the normal exponential map with respect to the conformally modified metric $f^{-2} g$ in  a collar neighborhood of $\Sigma$ where $f>0$.  Let $\Sigma_t = \Phi(\Sigma\times \{ t\})$ and $\Sigma_0 = \Sigma$. Moreover, $H(\cdot, t)$ and $A(\cdot, t)$ are the mean curvature and second fundamental form of $\Sigma_t$ in  the metric $g$, respectively. Lemma~\ref{lemma:monotonicity} implies that  $H(\cdot, t)\ge 0$ for $t\in (0, \varepsilon)$. From the first variation of area (cf. \cite[p. 60]{galloway1993} and \cite[p. 2649]{huang2018}), we have
		\begin{equation*}
		|\Sigma_t| - |\Sigma_0| = \int_0^t\left( -\int_{\Sigma_s} fH(\cdot, s) \, d\sigma \right) ds,   
		\end{equation*}
		where $|\Sigma_t|$ stands for the area of $\Sigma_t$ for all $t$. For $\varepsilon$ sufficiently small, $\Sigma$ is locally area minimizing. Therefore, the above identity implies that the mean curvature of $\Sigma_t$ cannot be strictly positive for $t<\varepsilon$. Hence $H(\cdot, t)\equiv 0$ for all $t\in[0,\,\varepsilon)$ and the $(n-1)$-volume of $\Sigma_t$ is a constant. Moreover, from Lemma~\ref{lemma:monotonicity}, $A(\cdot, t) \equiv 0$ and  $\Sigma_t$ is totally geodesic for $t\in [0, \varepsilon)$ with respect to the metric $g$. 
		
		\
		
		{\bf Proof of the second statement.}
		Furthermore, since $A(\cdot, t) \equiv 0$, from the first variation of the second fundamental form (cf. \cite[Lemma 7.6]{bethuel2006}), we obtain, for vectors $X, Y$ tangential to $\Sigma_t$,  
		\begin{equation*}
		\nabla_{\Sigma_t}^2 f(X, Y) + Rm(\nu, X, Y, \nu) f=0,   
		\end{equation*}
		where $\nabla_{\Sigma_t}$ denotes the connection  of $\Sigma_t$, $\nu$ is a unit normal vector to $\Sigma_t $ (both with respect to the metric $g$), and $Rm$ is the Riemann curvature tensor  of $(M, g)$ (with the sign convention that the Ricci tensor is the trace on the first and fourth components of $Rm$).   Because $\Sigma_t$ is totally geodesic,  $\nabla_{\Sigma_t}^2 f(X, Y) = \nabla^2 f(X, Y)$ for tangential vectors $X, Y$. Then by the equation \eqref{principaleq} and the assumption that  $f>0$,  we obtain 
		\begin{eqnarray*}
			& &\nabla^2 f=fRic +\frac{2}{f}\nabla \psi \otimes \nabla \psi-\frac{2|\nabla \psi|^2}{f(n-1)}g\\
			&\Rightarrow &\nabla_{\Sigma_t}^2 f=fRic +\frac{2}{f}\nabla \psi \otimes \nabla \psi-\frac{2|\nabla \psi|^2}{f(n-1)}g\\
			&\Rightarrow &- Rm(\nu, X, Y, \nu) f=fRic(X,Y) +\frac{2}{f}\nabla \psi \otimes \nabla \psi(X,Y)-\frac{2|\nabla \psi|^2}{f(n-1)}g(X,Y),\\
		\end{eqnarray*}
		and therefore
		\begin{equation*}Rm(\nu, X, Y, \nu)=-Ric(X,Y) -\frac{2}{f^2}\nabla \psi \otimes \nabla \psi(X,Y)+\frac{2|\nabla \psi|^2}{f^2(n-1)}g(X,Y).\end{equation*} 
		
		Now, for an orthonormal frame $\{ E_i \}$ on $\Sigma_t$, 
		\begin{eqnarray*}
			Ric(X, Y)& =& Rm(\nu, X, Y, \nu) + \displaystyle\sum_{i=1}^{n-1} Rm(E_i, X, Y, E_i)= \\
			&=& -Ric(X,Y) -\frac{2}{f^2}\nabla \psi \otimes \nabla \psi(X,Y)+\frac{2|\nabla \psi|^2}{f^2(n-1)}g(X,Y)+  Ric_{\Sigma_t} (X, Y).
		\end{eqnarray*}	
		It gives that, for all tangential vector fields $X, Y$ to $\Sigma_t$, 
		\begin{equation} \label{equation:Ricci}
		Ric(X, Y) = \frac{1}{2} Ric_{\Sigma_t}(X, Y)-\frac{1}{f^2}\nabla \psi \otimes \nabla \psi(X,Y)+\frac{|\nabla \psi|^2}{f^2(n-1)}g(X,Y).
		\end{equation}
		Then, combining the previous formula with \eqref{principaleq} it gives us
		\begin{eqnarray*}
			& & Ric(X, Y) = \frac{1}{2} Ric_{\Sigma_t}(X, Y)-\frac{1}{f^2}\nabla \psi \otimes \nabla \psi(X,Y)+\frac{|\nabla \psi|^2}{f^2(n-1)}g(X,Y)\\
			&\Rightarrow & \frac{\nabla^2 f}{f}(X,Y)-\frac{2}{f^2}\nabla \psi \otimes \nabla \psi (X,Y)+\frac{2|\nabla \psi|^2}{f^2(n-1)}g(X,Y)= \frac{1}{2} Ric_{\Sigma_t}(X, Y)\\
			& & -\frac{1}{f^2}\nabla \psi \otimes \nabla \psi(X,Y)+\frac{|\nabla \psi|^2}{f^2(n-1)}g(X,Y)\\
			&\Rightarrow & \frac{\nabla^2 f}{f}(X,Y)= \frac{1}{2} Ric_{\Sigma_t}(X, Y)+\frac{1}{f^2}\nabla \psi \otimes \nabla \psi(X,Y)-\frac{|\nabla \psi|^2}{f^2(n-1)}g(X,Y)\\
			&\Rightarrow & \nabla^2 f(X,Y)= \frac{f}{2} Ric_{\Sigma_t}(X, Y)+\frac{1}{f}\nabla \psi \otimes \nabla \psi(X,Y)-\frac{|\nabla \psi|^2}{f(n-1)}g(X,Y).\\
		\end{eqnarray*}
		Thus by \eqref{scalarcurv},
		
		\begin{equation}\label{equation:induced-static-1}
		\nabla_{\Sigma_t}^2 f(X,Y)= \frac{f}{2} Ric_{\Sigma_t}(X, Y)+\frac{1}{f}\nabla \psi \otimes \nabla \psi(X,Y)-\frac{fR}{2(n-1)}g(X,Y).
		\end{equation}
		Taking the trace of \eqref{equation:induced-static-1} and considering $R_{\Sigma_t}$ the scalar curvature of $\Sigma_t$, we get
		\begin{eqnarray}\label{equation:induced-static-2}
		& &\triangle_{\Sigma_t} f = \frac{1}{2}fR_{\Sigma_t}+trace_{\Sigma_{t}}(\frac{1}{f}\nabla \psi \otimes \nabla \psi)-\frac{1}{2}fR \nonumber\\
		& &\underbrace{\triangle_{\Sigma_t} f}_{\ref{laplacianos}} = \frac{1}{2}fR_{\Sigma_t}-\frac{1}{2}fR+\frac{|\nabla_{\Sigma_t}\psi|^{2}}{f}\\
		&\Rightarrow & \underbrace{\triangle f}_\eqref{lapla}-\underbrace{\nabla^2f(\nu,\nu)}_\eqref{principaleq} = \frac{1}{2}fR_{\Sigma_t}-\frac{1}{2}fR+\frac{|\nabla \psi|^2-\langle \nabla \psi, \nu\rangle^2}{f} \nonumber\\
		&\Rightarrow & \frac{2(n-2)}{f(n-1)}|\nabla \psi|^2-fRic(\nu,\nu)-\frac{2}{f}\langle \nabla \psi, \nu\rangle^2+\frac{2}{f(n-1)}|\nabla \psi|^2\nonumber\\
		& &=\frac{1}{2}fR_{\Sigma_t}-\frac{1}{f}\langle\nabla\psi,\,\nu\rangle^{2}-\frac{1}{2}fR+\frac{|\nabla \psi|^2}{f}\nonumber\\
		&\Rightarrow & \frac{|\nabla \psi|^2}{f}-\frac{\langle \nabla \psi, \nu\rangle^2}{f}=\underbrace{\frac{1}{2}fR_{\Sigma_t}+fRic(\nu,\nu)}_{\mbox{Gauss equation}}-\frac{1}{2}fR\nonumber\\
		&\Rightarrow & \frac{|\nabla \psi|^2}{f}-\frac{\langle \nabla \psi, \nu\rangle^2}{f}= 0\nonumber\\
		&\Rightarrow & |\nabla \psi|^2=\langle \nabla \psi, \nu\rangle^2\quad\Rightarrow\quad |\nabla_{\Sigma_t}\psi|^2=0;\quad\forall\, t \in[0,\,\varepsilon).\nonumber
		\end{eqnarray}

		So, from \eqref{equation:induced-static-1}  on $\Sigma_t$ we have
		\begin{eqnarray}\label{vai1}
		\nabla_{\Sigma_t}^2 f(X,Y)&=& \frac{f}{2} Ric_{\Sigma_t}(X, Y)-\frac{fR}{2(n-1)}g(X,Y).
		\end{eqnarray}

		Since $\psi$ must be constant in $\Sigma_t.$ Thus, from \eqref{equation:induced-static-2}  we get
		\begin{eqnarray}\label{legal}
		{\triangle_{\Sigma_t} f} &=& \frac{1}{2}f(R_{\Sigma_t}-R)	  \quad\mbox{in}\quad\Sigma_t,\quad\forall\, t\in[0,\,\varepsilon).
		\end{eqnarray}

		Take the divergence of \eqref{vai1} on $\Sigma_t$ and using the identity $\mathrm{div}_{\Sigma_t} \left(\nabla_{\Sigma_t}^2 f \right)=d(\Delta_{\Sigma_t} f) + Ric_{\Sigma_t}\cdot \nabla_{\Sigma_t} f $ (contracted Bochner formula), where the dot in the last term denotes tensor contraction. Hence, we derive that, on each $\Sigma_t$,
		
		\begin{eqnarray*}\label{bochnercontracted1}
			0&=&d\underbrace{(\Delta_{\Sigma_t} f)}_{\eqref{legal}}+ Ric_{\Sigma_t}\cdot \nabla_{\Sigma_t} f -\mathrm{div}_{\Sigma_t} \underbrace{\left(\nabla_{\Sigma_t}^2 f \right)}_{\eqref{vai1}}\nonumber\\
			&=&d\left(\frac{1}{2}fR_{\Sigma_t}-\frac{1}{2}fR\right)+ Ric_{\Sigma_t}\cdot \nabla_{\Sigma_t} f -\mathrm{div}_{\Sigma_t} \left(\frac{f}{2}Ric_{\Sigma_t} -\frac{fR}{2(n-1)}g  \right)\nonumber\\
			&=&\frac{1}{2}dfR_{\Sigma_t}+\frac{1}{2}fdR_{\Sigma_t}-\frac{1}{2}dfR-\frac{1}{2}fdR+ Ric_{\Sigma_t}\cdot \nabla_{\Sigma_t} f- \frac{f}{2}\underbrace{\mathrm{div}_{\Sigma_t}Ric_{\Sigma_t}}_{=\frac{1}{2}dR_{\Sigma_t}} \nonumber\\ && -\frac{1}{2} Ric_{\Sigma_t}\cdot \nabla_{\Sigma_t} f+\frac{1}{2(n-1)}dfR+\frac{1}{2(n-1)}fdR \nonumber  \\
			&=&\frac{1}{2}dfR_{\Sigma_t}+\frac{1}{4}fdR_{\Sigma_t}+\underbrace{\frac{1}{2} Ric_{\Sigma_t}}_{\eqref{vai1}}\cdot \nabla_{\Sigma_t} f  -\frac{(n-2)}{2(n-1)}dfR-\frac{(n-2)}{2(n-1)}fdR \nonumber \\
			&=&\frac{1}{2}dfR_{\Sigma_t}+\frac{1}{4}fdR_{\Sigma_t}+\left(\frac{\nabla^2_{\Sigma_t}f}{f}+\frac{1}{2(n-1)}Rg_{\Sigma_t}\right)\cdot \nabla_{\Sigma_t}f-\frac{(n-2)}{2(n-1)}dfR-\frac{(n-2)}{2(n-1)}fdR \nonumber \\
			&=&\frac{1}{2}dfR_{\Sigma_t}+\frac{1}{4}fdR_{\Sigma_t}+\frac{\nabla^2_{\Sigma_t}f(\nabla_{\Sigma_t} f)}{f}
			+\frac{1}{2(n-1)}Rdf   -\frac{(n-2)}{2(n-1)}dfR-\frac{(n-2)}{2(n-1)}fdR \nonumber \\
			&=&\frac{1}{2}dfR_{\Sigma_t}+\frac{1}{4}fdR_{\Sigma_t}+\frac{d|\nabla_{\Sigma_t}f|^2}{2f}-\frac{(n-3)}{2(n-1)}Rdf   -\frac{(n-2)}{2(n-1)}fdR. \nonumber\\
		\end{eqnarray*}

Then, a straightforward computation gives
		\begin{eqnarray*}\label{bochnercontracted11}
	0&=&\frac{1}{2}dfR_{\Sigma_t}+\frac{1}{4}fdR_{\Sigma_t}+\frac{d|\nabla_{\Sigma_t}f|^2}{2f}-\frac{(n-3)}{2(n-1)}Rdf   -\frac{(n-2)}{2(n-1)}fdR \nonumber\\
			&=&\frac{1}{4f}\left\{2fdfR_{\Sigma_t}+f^2dR_{\Sigma_t}+2d|\nabla_{\Sigma_t}f|^2-2\frac{(n-3)}{(n-1)}Rfdf-2\frac{(n-2)}{(n-1)}f^2dR\right\} \nonumber \\ 
			&=&\frac{1}{4f}\left\{{d\left[f^2R_{\Sigma_t}+2|\nabla_{\Sigma_t}f|^2\right]-2\frac{(n-3)}{(n-1)}Rfdf}  -2\frac{(n-2)}{(n-1)}f^2dR \right\}. \nonumber\\
		\end{eqnarray*}
		Hereafter, consider $n=3$.	Hence,
		\begin{eqnarray}\label{danada}
		d\left[f^2R_{\Sigma_t}+2|\nabla_{\Sigma_t}f|^2\right] = f^2dR. 
		\end{eqnarray}
		Assuming that $dR\leq0$ on $\Sigma_t$. Then, by integration we get
		\begin{eqnarray*}
			0\geq\int_{\Sigma_t}(f^2dR) d\sigma_t =\int_{\Sigma_t}d\left[f^2R_{\Sigma_t}+2|\nabla_{\Sigma_t}f|^2\right]d\sigma_t = 0 \quad\Rightarrow\quad R\quad\mbox{is constant at}\quad\Sigma_{t}. 
		\end{eqnarray*}
		Therefore, from \eqref{danada} we have
		\begin{eqnarray}\label{magicid}
		f^2R_{\Sigma_t}+2|\nabla_{\Sigma_t}f|^2 = c. 
		\end{eqnarray}
		Furthermore, from $f^2R_{\Sigma_t}  + 2|\nabla_{\Sigma_t} f|^2 =c$ and $\Delta_{\Sigma_t} f=\frac{1}{2}f R_{\Sigma_t}-\frac{1}{2}fR$ we get
		 \begin{eqnarray*}
     2f\Delta_{\Sigma_t} f = f^{2}R_{\Sigma_t}- f^2R &\Rightarrow& 2 f \Delta_{\Sigma_t} f=c- 2|\nabla_{\Sigma_t} f|^2- f^2R \\
    &\Rightarrow& 2 f \Delta_{\Sigma_t} f+ 2|\nabla_{\Sigma_t} f|^2=-f^2R+c. 
    \end{eqnarray*}

		By integration,
		\begin{eqnarray}\label{pqp}
		&\Rightarrow & 2 \int_{\Sigma_t}f \Delta_{\Sigma_t} f d\sigma_t + 2\int_{\Sigma_t}|\nabla_{\Sigma_t} f|^2 d\sigma_t=\int_{\Sigma_t}(-f^{2}R+c)d\sigma_t\nonumber\\
		&\Rightarrow & 0= -2 \int_{\Sigma_t}|\nabla_{\Sigma_t}f|^2 d\sigma_t + 2\int_{\Sigma_t}|\nabla_{\Sigma_t} f|^2 d\sigma_t =\int_{\Sigma_t}(-f^{2}R+c)d\sigma_t\nonumber\\
		&\Rightarrow& R\int_{\Sigma_t}f^{2}d\sigma_t =  c|\Sigma_t|.
		\end{eqnarray}
		This shows us that $R=0$ if and only if $c=0.$

		Suppose $c\neq0$. Using that the Gauss curvature is $2K=R_{\Sigma_t}$ and combining \eqref{legal} with \eqref{vai1} we get
		$$(\nabla^{2}_{\Sigma_t}f)(X,\,Y) = \frac{1}{2}\left(fK-\frac{fR}{2}\right)g(X,\,Y)\quad\Rightarrow\quad\nabla^{2}_{\Sigma_t}f=\frac{\Delta_{\Sigma_t}f}{2}g_{\Sigma_t}.$$
		Since $R$ is constant and $div_{\Sigma_t}(\nabla^{2}_{\Sigma_t}f)=d(\Delta_{\Sigma_t}f)+Kdf$, the divergence of the above equation gives
		\begin{eqnarray*}
			&&0=\frac{1}{2}d(\underbrace{\Delta_{\Sigma_t}f}_{\eqref{legal}}) + Kdf\quad\Rightarrow\quad0=3Kdf+fdK-\frac{R}{2}df\quad\Rightarrow\quad 0= 3Kf^{2}df+f^{3}dK-\frac{Rf^{2}}{2}df\nonumber\\
			&\Rightarrow& 0= d(Kf^{3})-\frac{R}{6}df^{3}\quad\Rightarrow\quad d\left(Kf^{3}-\frac{R}{6}f^{3}\right)=0\quad\Rightarrow\quad d\left(\frac{R_{\Sigma_t}}{2}f^{3}-\frac{R}{6}f^{3}\right)=0.
		\end{eqnarray*}
		So $R_{\Sigma_t}f^{3}-\frac{R}{3}f^{3}=c_1$, for some constant $c_1$ in $\Sigma_t$.
		
		Considering $f>0$ at $\Sigma_t$, from \eqref{legal}
		\begin{eqnarray}\label{danada111}
			&&\int_{\Sigma_t}\left(R_{\Sigma_t}f-\frac{Rf}{3}\right)d\sigma=c_{1}\int_{\Sigma_{t}}f^{-2}d\sigma
			\quad\Rightarrow\quad0<\frac{2R}{3}\int_{\Sigma_t}fd\sigma=c_{1}\underbrace{\int_{\Sigma_{t}}f^{-2}d\sigma}_{>0}\quad\Rightarrow\quad c_1 > 0,
		\end{eqnarray}
		see Remark \ref{remarkarea}.
		Since $R(p)\geq 3R_{\Sigma_t}(p)$ for some $p\in\Sigma_t$, by hypothesis, the only possibility is $c_1=0$, i.e., $3R_{\Sigma_t}=R$. Nonetheless, we have $R$ constant at $\Sigma_t$. Hence, from \eqref{legal} we obtain
		\begin{eqnarray*}
			0=\int_{\Sigma_t}\Delta_{\Sigma_t}f d\sigma=-\frac{1}{3}R\int_{\Sigma_t}f d\sigma,
		\end{eqnarray*}
		Hence, the only possibility is $\int_{\Sigma_t}f d\sigma=0$, which is impossible since $f>0.$ So, $R=R_{\Sigma_t}=0$.
		
		On the other hand, if $R=0$ we get $c=0$. Hence, from \eqref{magicid}  we obtain $f$ constant at $\Sigma_t$. Thus, we conclude our proof following the same steps of the third statement of \cite[Proposition 5]{huang2018}. 	
		
		\
		
		{\bf Proof of the third statement.}
		Furthermore, from \eqref{equation:induced-static-1} and from the fact that
		$R=0$, $f$ and $\psi$ are constant in $\Sigma_t$, we have $Ric_{\Sigma_t}(X,Y)=0$. Consequently, from \eqref{equation:Ricci}, $Ric(X, Y)=Ric_{\Sigma_t}(X,\,Y)=0$ for any tangent vectors in $\Sigma_t$. 
		Since $R=R_{\Sigma_t}=0$ in $\Sigma_t$ for all $t\in[0,\,\varepsilon)$ by  Gaussian equation we have $Ric(\nu,\nu)=0.$

		Moreover, we have $A(\cdot,\,t)=0$ in $\Sigma_t$ for all $t\in[0,\,\varepsilon)$, then by Codazzi equation $$R(X, Y, Z, \nu)=(\nabla_Y B)(X, Z,\nu)-(\nabla_X B)(Y, Z,\nu)$$ we get $R(X, Y, Z, \nu)=0$, and therefore $Ric(X, \nu)=0$, where $\langle B(X,Y),\nu\rangle= A(X,Y)$ in $\Sigma_t$.
		
		Thus, $Ric=0$ in $U\subseteq M$.

	\end{proof}

	Now, the proof for the next theorem is basically the same of \cite[Theorem 1]{huang2018} and follows from the first part of Theorem \ref{propositionsplitting}. We have sketched the proof here for completeness of the text.

	\begin{proof}[Proof of Theorem~\ref{teo1}]
		The proof is by contradiction. If $f$ is not zero on $\partial M$, by Theorem~\ref{propositionsplitting}, there is a subset $U$ of $M$ and diffeomorphism $\Phi :\partial M \times [0,\varepsilon) \rightarrow U$, with $\Phi(\partial M\times {t})=\partial M_t$ and  $\partial M_0=\partial M$ where $H(\cdot,\,t)=0$. That is, a collar neighborhood of $\partial M$ in $M$,  splits as a foliation of minimal hypersurfaces $\partial M_t \subset U\subset M$. It contradicts that $M$ contains no closed minimal hypersurfaces other than $\partial M$. Therefore, $f$ must be zero on $\partial M.$
	\end{proof}

	As consequence of the above theorem, a simple integration of \eqref{lapla} proves the following result. 
	
	\begin{proof}[Proof of Corollary~\ref{coro1}]
		From the fact that any two ends must be separated by a minimal surface and $M$ does not contain any minimal surfaces in its interior other than $\partial M$, $M$ can have only one end with zero ADM mass. From \eqref{lapla} we have
		\begin{eqnarray*}
			&&0\leq2\left(\frac{n-2}{n-1}\right)\int_{M} |\nabla\psi|^{2} dV=\int_{M}f\Delta fdV=-\int_{M}|\nabla f|^{2}dV-\int_{\partial M} f\langle\nabla f,\,\nu\rangle dS+\displaystyle\lim_{r\rightarrow\infty}\int_{S(r)}f\langle\nabla f,\,\eta\rangle dS.
		\end{eqnarray*}
		
		Now, since $f=0$ in $\partial M$ (Theorem \ref{teo1}) and the end has zero ADM mass (i.e., $f=1+o(r^{-(n-1)}$)), we get
		\begin{eqnarray*}
			&&0\leq-\int_{M}|\nabla f|^{2} dV.
		\end{eqnarray*}
		Hence, if $\partial M$ is non-empty, $|\nabla f|=0$ in $M.$ So, $f$ is constant, and $g$ is Ricci-flat by \eqref{principaleq} and \eqref{lapla}. Since any three dimensional Einstein manifold has constant sectional curvature, it must be isometric to Euclidean space. Contradicting the fact that the horizon boundary is a compact minimal surface in $M.$  
	\end{proof}

	The proof of the next two theorems follows some ideas contained in \cite{huang2018,miao2005}, and we also follow closely their notations. Thus, will be more easier to compare the proofs. On the other hand, the conformal geometry was fundamental and in that sense turns our proofs more delicate. 
	\begin{proof}[Proof of Theorem \ref{tabom}]
		Consider $f=1-\psi$ (extremal solution). Now, for the conformal metric $\overline{g}=f^{-2}g$ the laplacian operator $\overline{\Delta}$ of a smooth function $F:M\rightarrow\mathbb{R}$ is given by
		\begin{eqnarray*}
			\overline{\Delta}F&=&f^{2}\left[\Delta F  - \frac{(n-2)}{f}\langle\nabla F,\,\nabla f\rangle\right].
		\end{eqnarray*}
		Considering $n=3$, $F=f$ and $\psi = 1 - f$ we get
		\begin{eqnarray}\label{pdg}
		\overline{\Delta}f = f^{2}\underbrace{\left(\Delta f - \frac{|\nabla f|^{2}}{f} \right)}_{= 0\quad\mbox{by}\quad\eqref{lapla}} = 0.
		\end{eqnarray}
		Consequently,
		\begin{eqnarray*}
			0=\overline{\Delta}f = \overline{\Delta}_{\partial M}f + \overline{H}\frac{\partial f}{\partial\nu} + \overline{\nabla}^{2}f(\nu,\,\nu),
		\end{eqnarray*}
		where $\nu$ and $\overline{H}$ are the normal and the mean curvature with respect to $\bar{g}$, respectively.
		
		For any function $F$ on $M$ the hessian $\overline{\nabla}^{2}$ for the metric $\bar{g}$ is given by
		\begin{eqnarray*}
			\overline{\nabla}^2F(\nu,\nu)&=&f^{2}\left[ \nabla^{2}F(\nu,\nu)-\frac{1}{f}g(\nabla F,\nabla f)g(\nu,\nu)+\frac{2}{f}g\left(\nu, \nabla f\right)g\left(\nabla F,\nu\right)\right].
		\end{eqnarray*}
		Thus, considering $F=f$ we get
		\begin{eqnarray*}
			\overline{\nabla}^2f(\nu,\nu)
			&=&f^{2}\left[\nabla^{2}f(\nu,\nu)-\frac{1}{f}|\nabla f|^{2}+\frac{2}{f}g\left(\nabla f,\,\nu\right)^{2}\right].
		\end{eqnarray*}
		Assuming $\psi=1-f$ and $n=3$, from \eqref{principaleq} we get
		\begin{eqnarray*}
			\overline{\nabla}^2f(\nu,\nu)
			&=&f^{2}\left[fRic(\nu,\nu)-\frac{2}{f}|\nabla f|^{2}+\frac{4}{f}g\left(\nabla f,\,\nu\right)^{2}\right].
		\end{eqnarray*}
		Hence,
		\begin{eqnarray*}
			0= \overline{\Delta}_{\partial M}f + \overline{H}\frac{\partial f}{\partial\nu} + f^{3}Ric(\nu,\,\nu) - 2f|\nabla f|^{2}+4fg\left(\nabla f,\,\nu\right)^{2}.
		\end{eqnarray*}
		
		Since $f$ is harmonic in $(M,\,\bar{g})$, by maximum principle and $f\rightarrow 1$ at infinity, we
		may assume that $\displaystyle\inf_{M}(f)$ occurs on $\partial M$ and $f$ is not a constant function. Let $f(y)= \displaystyle\min_{\partial M}f$, for some $y\in\partial M$. Suppose that $f(y)\leq0$, we can conclude by the maximum principle and Hopf's lemma that $\overline{\Delta}_{\partial M}f(y)\geq0$ and $\frac{\partial f}{\partial\nu}(y)>0$ (cf. \cite[Proposition 3]{miao2005}). So, by Gauss equation we have
		\begin{eqnarray*}
			0= \overline{\Delta}_{\partial M}f + \overline{H}\frac{\partial f}{\partial\nu}+ \frac{1}{2}f^{3}(H^{2}-|A|^{2} - 2R_{\gamma}+R)-2f|\nabla f|^{2}+4fg\left(\nabla f,\,\nu\right)^{2}.
		\end{eqnarray*}
		It is well-known that $\overline{H}=f^{2}H + 2f\frac{\partial f}{\partial\nu}$, then using $f=1-\psi$ and \eqref{scalarcurv} we get
		\begin{eqnarray}\label{verificar}
		-\overline{\Delta}_{\partial M}f - \overline{H}\frac{\partial f}{\partial\nu} =   f\Bigg\{\frac{f^2}{2}(H^{2}-|A|^{2} - 2R_{\gamma})+3\left(\frac{\partial f}{\partial\nu}\right)^{2}+\underbrace{\left(\frac{\partial f}{\partial\nu}\right)^{2}-|\nabla f|^{2}}_{\leq0\quad\mbox{by Cauchy-Schwarz}}\Bigg\}.
		\end{eqnarray}
		Furthermore, since $(n - 1)|A|^2 \geq H^2$, we have $$\frac{f^{2}}{2}(H^{2}-|A|^{2} - 2R_{\gamma})+3\left(\frac{\partial f}{\partial\nu}\right)^{2}\leq \frac{f^{2}}{4}H^{2}-f^{2}R_{\gamma}+3\left(\frac{\partial f}{\partial\nu}\right)^{2}\leq0.$$ Now, from \eqref{verificar} we can see that $f(y)\leq0$ leads us to a contradiction. So, $f(y)=\displaystyle\inf_{M}f>0$ which implies $f>0$ in $M$.

		Suppose, to give a contradiction, that there is a closed, locally  area minimizing surface $\Sigma$ in $M$. By Theorem \ref{propositionsplitting}, $g$ must be Ricci-flat  in an open neighborhood of the minimal surface. Since $ f > 0 $ and $g$ is analytic on $M$, $(M, g)$ 
		has vanishing Ricci curvature. In three dimensions, this implies $(M, g)$ is isometric to an exterior region in the Euclidean space, which is free of closed minimal surfaces. It gives a contradiction.
	\end{proof}
	
	The following proof is based on the above theorem. We will consider the conformal metric $\bar{g}=f^{-2}g$ and use some of the formulas provided in the result proved before. Moreover, it is easy to conclude from the above proof that $f>0$ in $M$ for a mean-stable surface $\Sigma$ satisfying the same conditions over the mean curvature and the Gauss curvature \eqref{condruim}.
	\begin{proof}[Proof of Theorem \ref{tabom1}]
		In what follows, consider $f>0$ (cf. Theorem \ref{tabom}). From \eqref{verificar}, we have
		\begin{eqnarray*}
			\overline{\Delta}_{\partial M}f + \overline{H}\frac{\partial f}{\partial\nu} = \frac{f^3}{2}(2K-H^{2}+|A|^{2})-4f\left(\frac{\partial f}{\partial\nu}\right)^{2}+f|\nabla f|^{2}.
		\end{eqnarray*}
		First, divide the above formula by $f$. Then, using that the volume measure of the surface $\partial M$ is $d\bar{\sigma}=f^{-2}d\sigma$ and $\overline{H}=\varepsilon$, by integration we get
		\begin{eqnarray*}
			\int_{\partial M}\frac{1}{f}\overline{\Delta}_{\partial M}fd\bar{\sigma} &+& \int_{\partial M}\varepsilon f^{-2}\frac{\partial \log f}{\partial\nu}d\sigma = \frac{1}{2}\int_{\partial M}(2K-H^{2}+|A|^{2})d\sigma\nonumber\\
			&-&4\int_{\partial M}\left(\frac{\partial\log f}{\partial\nu}\right)^{2}d\sigma
			+\int_{\partial M}\frac{1}{f^{2}}|\nabla f|^{2}d\sigma.
		\end{eqnarray*}
		
		Now, make $V=\log f$ to obtain
		\begin{eqnarray}\label{laplaV}
		\frac{\Delta f}{f}=\Delta V + |\nabla V|^{2}.
		\end{eqnarray}
		Thus,
		\begin{eqnarray}\label{pes1}
		\int_{\partial M}(\overline{\Delta}_{\partial M}V+\|\overline{\nabla}_{\partial M}V\|^{2})d\bar{\sigma} &+& \varepsilon\int_{\partial M}f^{-2}\frac{\partial V}{\partial\nu}d\sigma = \frac{1}{2}\int_{\partial M}(2K-H^{2}+|A|^{2})d\sigma\nonumber\\
		&-&4\int_{\partial M}\left(\frac{\partial V}{\partial\nu}\right)^{2}d\sigma
		+\int_{\partial M}|\nabla V|^{2}d\sigma.
		\end{eqnarray}
		Here, $\|\cdot\|$ is the norm for the metric $\bar{g}$. Moreover, from \eqref{pdg} and \eqref{laplaV} we can infer that
		\begin{eqnarray*}
			0=\overline{\Delta}V+\|\overline{\nabla}V\|^{2}.
		\end{eqnarray*}

		By integration, the asymptotic conditions give us (cf. \cite[Equation 23]{miao2005})
		\begin{eqnarray*}
			\int_{\partial M}f^{-2}\frac{\partial V}{\partial\nu}d\sigma&=&\sum_{\kappa}\lim_{r\rightarrow\infty}\int_{\mathbb{S}_{\kappa}(r)}f^{-3}\frac{\partial f}{\partial\eta}d\sigma + \int_{M}\|\overline{\nabla} V\|^{2}d\bar{\sigma}\nonumber\\
			&=&
			\kappa\lim_{r\rightarrow\infty}\int_{\mathbb{S}(r)}f^{-3}\langle\nabla f,\,\eta\rangle d\sigma + \int_{M}\|\overline{\nabla} V\|^{2}d\bar{\sigma}\nonumber\\
			&=& m\kappa\displaystyle\lim_{r\rightarrow\infty}\left[\left(\dfrac{\left(1+\dfrac{2m}{r}\right)}{\left(1-\dfrac{m}{r}\right)^{3}}\right)\underbrace{\left(\dfrac{1}{r^{2}}\int_{\mathbb{S}(r)}d\sigma\right)}_{=4\pi}\right] + \int_{M}\|\overline{\nabla} V\|^{2}d\bar{\sigma}\nonumber\\
			&=& 4m\kappa\pi + \int_{M}|{\nabla} V|^{2}d{\sigma},
		\end{eqnarray*}
		where $\nu$ is pointing towards the asymptotic end, and $\|\overline{\nabla}V\|^{2}d\bar{\sigma}=|\nabla V|^{2}d\sigma$. Moreover, $\eta$ stands for the normal vector of the sphere $\mathbb{S}(r)$. Here, $\kappa$ represents the number of ends $E$ of the manifold $M$. Hence,
		\begin{eqnarray}\label{pes2}
		4m\varepsilon\kappa\pi + \varepsilon\int_{M}|{\nabla} V|^{2}d{\sigma} = \varepsilon\int_{\partial M} f^{-2}\frac{\partial V}{\partial\nu}d\sigma.
		\end{eqnarray}
		Furthermore, since $K\geq\frac{1}{4}H^{2}+3\left(\frac{\partial V}{\partial\nu}\right)^{2}$, from the Gauss-Bonnet theorem, \eqref{pes1} and \eqref{pes2} we have
		\begin{eqnarray}\label{mass1}
		\int_{\partial M}|\nabla_{\partial M}V|^{2}d\sigma &+& 4m\kappa\varepsilon\pi+ \varepsilon\int_{M}|\nabla V|^{2}d\sigma \nonumber\\
		&=& \frac{1}{2}\int_{\partial M}(|A|^{2}-H^{2})d\sigma-4\int_{\partial M}\left(\frac{\partial V}{\partial\nu}\right)^{2}d\sigma+\int_{\partial M}|\nabla V|^{2}d\sigma+4\pi.
		\end{eqnarray}

		Now, by the same arguments used by Miao \cite[Proposition 3]{miao2005} we can conclude that if $\partial M$ has constant mean curvature and is mean-stable we have
		\begin{eqnarray}\label{verificar2}
		8\pi\geq \int_{\partial M}\left( |A|^2 + Ric(\nu, \nu)\right)\, d\sigma.
		\end{eqnarray}
		See more details about the above inequality in \cite{Christodoulou1986} and \cite{liyau1982}.
		On the other hand, the Gauss equation gives us
		\begin{eqnarray*}
			|A|^{2}+Ric(\nu,\,\nu)=\frac{1}{2}(H^{2}+|A|^{2} - 2K+R).
		\end{eqnarray*}
		Combining the above identity with  \eqref{scalarcurv} and \eqref{verificar2} provides
		\begin{eqnarray*}
			12\pi-\int_{\partial M}\left(\frac{1}{2}H^{2}+|\nabla V|^{2}\right)\, d\sigma\geq \frac{1}{2}\int_{\partial M}|A|^{2}\, d\sigma.
		\end{eqnarray*}
		
		Then, from \eqref{mass1} we obtain
		\begin{eqnarray*}
			4m\kappa\varepsilon\pi\leq\underbrace{4\int_{\partial M}\left(\frac{\partial V}{\partial\nu}\right)^{2}d\sigma+\int_{\partial M}|\nabla_{\partial M}V|^{2}d\sigma + \varepsilon\int_{M}|\nabla V|^{2}d\sigma}_{\geq0} + 4m\kappa\varepsilon\pi \leq 16\pi - \int_{\partial M}H^{2}d\sigma.
		\end{eqnarray*}
		So,
		\begin{eqnarray*}
			m\leq \frac{1}{\kappa\varepsilon}\left(4 - \frac{1}{4\pi}\int_{\partial M}H^{2}d\sigma\right).
		\end{eqnarray*}
		Combining \eqref{mh} with the last inequality we get the result.
	\end{proof}

	
	\

\end{document}